\newtheorem{teo}{Theorem}[section]
\newtheorem{rem}{Remark}[section]
\newtheorem{pro}{Proposition}[section]
\newtheorem{lem}{Lemma}[section]
\newtheorem{cor}{Corollary}[section]
\title{\textbf {Uniformly polynomially stable approximations for a class of second order evolution equations}}
\author{{Hajjej Zayd}\\ \textit{D\'epartement de Math\'ematiques, Facult\'e des Sciences de Monastir,}\\ \textit{Université de Monastir, 5019 Monastir, Tunisie.}\\ \textit{Unité de Recherche: Analyse et Contr\^ole des \'Equations aux Dérivées }\\ \textit{Partielles (Code : MESRS  05/UR/15-01)}\\
\textit{email: hajjej.zayd@gmail.com}}
\date{}
\begin{document}
\maketitle
 \abstract{}
 In this paper we study  time semi-discrete approximations of a class of polynomially stable infinite dimensional systems modeling the damped vibrations. We prove that adding a suitable numerical viscosity term in the numerical scheme, one obtains approximations that are uniformly polynomially stable with respect to the discretization parameter.\\\\\
%\\
%\\

\textbf{Key words and phrases}: Polynomial stabilization, observability inequality, discretization, viscosity term.\\

\textbf{2010 MSC}: 93D15, 93B07, 49M25.

%\\
%\\
%\textbf{Mathematics Subject Classification:}

\section{Introduction}
Let $X$ and $Y$ be real Hilbert spaces ( $Y$ will be identified to its dual space) with norms denoted respectively by $\Vert .\Vert_X$ and $\Vert .\Vert_Y$.\\
Let $\mathcal{A} : D(\mathcal{A})\to X$ be a self-adjoint positive operator with $\mathcal{A}^{-1}$ compact in $X$ and let $\mathcal{B}\in\mathfrak{L}(Y,X)$.
 We consider the system described by
\begin{equation}\label{1}
\left \{
\begin{array}{lcr}
\ddot{w}(t)+\mathcal{A}w(t)+\mathcal{B}\mathcal{B}^{*}\dot{w}(t)=0,\;\;t\geq 0\;\;\; \\
w(0)=w_0,\; \dot{w}(0)=w_1.
\end{array}
\right.
\end{equation}
Most of the linear equations modeling the damped vibrations of elastic structures can be written in the form (\ref{1}).\\
We define the energy of solutions at instant $t$ by
\begin{equation}\label{2}
E(t)=\frac{1}{2}\left\{\Vert \dot{w}(t)\Vert_X^2+\Vert \mathcal{A}^{\frac{1}{2}}w(t)\Vert_X^2\right\},
\end{equation}
which satisfies
\begin{equation}\label{3}
\frac{dE}{dt}(t)=-\Vert \mathcal{B}^*\dot{w}(t)\Vert_Y^2,\;\;\;\;\; \forall t\geq 0.
\end{equation}
It is well known that the natural well-posedness space for (\ref{1}) is \\
$H=V\times X$ where $V=D(\mathcal{A}^{\frac{1}{2}})$ and $\Vert x\Vert_V=\Vert \mathcal{A}^{\frac{1}{2}}x\Vert_X, \forall x\in V$.\\
The existence and uniqueness of finite energy solutions of (\ref{1}) can be obtained by standard semigroup methods.\\

We consider the undamped system associated to (\ref{1}):
\begin{equation}\label{4}
\left \{
\begin{array}{lcr}
\ddot{\phi}(t)+\mathcal{A}\phi(t)=0,\;\;t\geq 0\;\;\; \\
\phi(0)=w_0,\; \dot{\phi}(0)=w_1.
\end{array}
\right.
\end{equation}

We assume that system (\ref{4}) satisfy a "weakened" observability inequality (that is the case when the damping operator is effective on a subdomain
where the Geometric Control Condition is not fulfilled [4]), that is there exist positive constants $T, C>0$ and $\beta>-\frac{1}{2}$ such that for all $(w_0, w_1)\in D(\mathcal{A})\times V$ we have
\begin{equation}\label{5}
\int_{0}^{T} \Vert \mathcal{B}^{*}\phi'(t)\Vert_Y^2 dt \geq C \Vert (w_0, w_1)\Vert_{X_{-\beta}\times X_{-\beta-\frac{1}{2}}}^2,
\end{equation}
where
\begin{equation*}
\left \{
\begin{array}{lcr}
            X_{\beta} = D(\mathcal{A}^{\beta}), \;\; \beta \geq 0,\\

             X_{-\beta}= (D({\mathcal{A}^{\beta}}))',\;\;\; \beta \geq 0.
\end{array}
\right.
\end{equation*}
The dual space is obtained by means of the inner product in $X$.\\
Then system (\ref{1}) is polynomially stable \cite{2}, that is there exist a constant $C_1>0$ such that for all $t>0$ and for all $(w_0, w_1)\in D(\mathcal{A})\times V$ we have
$$ E(t)\leq \frac{C_1}{t^{\frac{1}{2\beta+1}}}\Vert (w_0, w_1)\Vert_{D(\mathcal{A})\times V}^2.$$

Our goal is to get, as consequence of (\ref{5}), polynomial stability results for time-discrete systems.

If we introduce $z(t):= \left( \begin{array}{c}
w(t) \\
\dot{w}(t)
\end{array} \right),$ \;\;\;\; $y(t)=\left( \begin{array}{c}
\phi(t) \\
\dot{\phi}(t)
\end{array} \right),$\\

then $z$ satisfies
$$\dot{z}(t)= \left( \begin{array}{c}
\dot{w}(t) \\
-\mathcal{A}w(t)-\mathcal{B}\mathcal{B}^*\dot{w}(t)
\end{array} \right).$$
Consequently the problem (\ref{1}) may be rewritten as the first order evolution equation
\begin{equation}\label{6}
\left \{
\begin{array}{lcr}
            \dot{z}(t) =A z(t)-B B^* z(t),\\

             z(0)= z_0=(w_0, w_1),
\end{array}
\right.
\end{equation}

where $A:D(A)\to H , A=\left(\begin{array}{c}
0\;\;\;\;\;\;\;\;\;\;I\\
-\mathcal{A}\;\;\;\;\;\;0
\end{array} \right),$\;\;\;$B=\left( \begin{array}{c}
0 \\
\mathcal{B}
\end{array} \right),$\\
$B^*=(0,\mathcal{B}^*)$ ,\;\; $D(A)= D(\mathcal{A})\times V$ and $H=V\times X$.\\

With this notation, (\ref{5}) becomes
\begin{equation}\label{8}
\int_{0}^{T} \Vert B^*\; y(t)\Vert_Y^2 dt \geq C \Vert z_0 \Vert_{X_{-\beta}\times X_{-\beta-\frac{1}{2}}}^2.
\end{equation}
\\\\\\

In recent years an important literature was devoted to the space/time semi-discrete
approximations of a class of exponentially stable infinite dimensional
systems. Let us also mention the recent work [1], where polynomial stability was discussed for space discrete schemes of (\ref{1}). It has been proved that exponential/polynomial
stability may be lost under numerical discretization as the mesh size tends
to zero due to the existence of high-frequency spurious solutions. \\

Several remedies have been proposed and
analyzed to overcome this difficulties. Let us quote the Tychonoff regularization [11, 22, 21, 23, 8, 1 ],
a bi-grid algorithm [9, 19], a mixed finite element method [10, 3, 5, 6, 18], or filtering the high
frequencies [13, 16, 25, 7, 24].
As in [1, 8, 21, 22, 23] our goal is to damp the spurious high frequency modes by introducing a numerical
viscosity in the approximation schemes. Though our paper is inspired from \cite{8}, it differs from that paper on the following points:\\

i)\; We analyze the polynomial decay of the discrete schemes when the continuous problem has such a decay.\\

ii)\; For the proof of the discrete observability inequality, we will use a  method  based on a decoupling argument of low and high frequencies, the low frequency observability property for time semi-discrete approximations of conservative linear systems and the dissipativity of the numerical viscosity on the high frequency components. But, for the low frequency, contrary to \cite{8} where a Hautus-type test is required, we use a spectral approach and a discrete Ingham  type inequalities ( when the spectrum of the spatial operator $A$ associated
with the undamped problem satisfies such a gap condition). \\

Note however that we cannot apply these methods when the damped operator $B$ is not bounded, as in [2], where
the wave equation is damped by a feedback law on the boundary. Dealing with unbounded damping operators $B$ needs
further work.\\

Despite all the existing literature, this article seems to be the first one to provide a systematic way of transferring
polynomial decay properties from the continuous to the time-discrete setting.\\

The paper is organized as follows. In section 2, we prove a uniform "weakened" observability after the addition of numerical viscosity term by using, as we said, a decoupling argument and a spectral approach. Section 3 is devoted to prove the main result of this paper. We illustrate our results by presenting different examples in Section 4. Finally, some  further comments and open problems are collected in section 5.\\

In the following, we will write $E\sim F$ instead of $c_1 E \leq F \leq c_2 E$ for brevity, where $c_1, c_2>0$ are constants.\\

\section{observability of time-discrete systems}

In this section, we assume that system (\ref{6}) is polynomially stable and $B^*\in\mathfrak{L}(H,Y)$, i.e. there exists a constant $K_B$ such that

$$\Vert B^*z\Vert_Y\leq K_B\Vert z\Vert_H,\;\;\;\forall\;z\in H.$$

We start considering the following  time-discretization scheme for the continuous system (\ref{1}) or equivalent for the system (\ref{6}).
For any $\Delta t>0$, we denote by $z^k$ the approximation of the solution $z$ of system (\ref{6}) at time $t_k=k\Delta t$, for $k\in\mathbb{N}$, and we consider time discretization of system (\ref{6}):

\begin{equation}\label{9}
\left \{
\begin{array}{lcr}
            \frac{\tilde{z}^{k+1}-z^k}{\Delta t} = A\left(\frac{z^k+\tilde{z}^{k+1}}{2}\right)-BB^*\left(\frac{z^k+\tilde{z}^{k+1}}{2}\right),\;k\in \mathbb{N}, & \\\\

             \frac{z^{k+1}-\tilde{z}^{k+1}}{\Delta t}={(\Delta t)^2}{A^2}{z^{k+1}} ,\;k\in\mathbb{N},\\\\

             z^0=z_0.
\end{array}
\right.
\end{equation}

The numerical viscosity term $(\Delta t)^2A^2$ in (\ref{9}) is introduced in order to damp the high frequency modes.\\

We can define the discrete energy by:
\begin{equation}\label{10}
E^k=\frac{1}{2}\Vert z^k\Vert_H^2,\;\;k\geq 0.
\end{equation}
The energy satisfies (\cite{8}):

\begin{equation*}
E^{k+1}+(\Delta t)^3\left\Vert Az^{k+1}\right\Vert_H^2+\frac{(\Delta t)^6}{2}\left\Vert {A^2}{z}^{k+1}\right\Vert_H^2+ \Delta t \left\Vert B^*\left(\frac{z^k+\tilde{z}^{k+1}}{2}\right)\right\Vert_Y^2 = E^k.
\end{equation*}
Summing from $j=0$ to $l=[T/\Delta t]$, it follows then that :\\

$E^0-E^{l+1}=\Delta t\displaystyle{\sum_{j=0}^{l}}(\Delta t)^2\left\Vert Az^{j+1}\right\Vert_H^2+\frac{\Delta t}{2}\displaystyle{\sum_{j=0}^{l}}(\Delta t)^5 \left\Vert {A^2}{z^{j+1}}\right\Vert_H^2$
\begin{equation}\label{11}
 \hspace{4cm} +\;\Delta t\displaystyle{\sum_{j=0}^{l}}\left\Vert B^*\left(\frac{z^j+\tilde{z}^{j+1}}{2}\right)\right\Vert_Y^2.
\end{equation}

Note that this numerical scheme is based on the decomposition of the operator $A-BB^*+(\Delta t)^2A^2$ into its
conservative and dissipative parts, that we treat differently. Indeed, the midpoint scheme is appropriate for conservative
systems since it preserves the norm conservation property. This is not the case for dissipative systems, since midpoint
schemes do not preserve the dissipative properties of high frequency solutions. Therefore, we rather use an implicit
Euler scheme, which efficiently preserves these dissipative properties.\\\\

The convergence of the solutions of (\ref{9}) towards those of the original system (\ref{6}) when $\Delta t \to 0$ holds in a suitable
topology (\cite{8}).\\
As the continuous level, we will prove that the uniform polynomial decay of system (\ref{6}) is a consequence of the following "weakened" observability inequality for every solution of the following time-discrete system:
\begin{equation}\label{12}
\left \{
\begin{array}{lcr}
            \frac{\tilde{u}^{k+1}-u^k}{\Delta t} = A\left(\frac{u^k+\tilde{u}^{k+1}}{2}\right),\;k\in \mathbb{N}, & \\\\

              \frac{u^{k+1}-\tilde{u}^{k+1}}{\Delta t}={(\Delta t)^2}{A^2}{u^{k+1}},\;k\in\mathbb{N},\\\\

            u^0=u_0.
 \end{array}
\right.
\end{equation}

 We want to show that there exist positive constants $T$, $c$ and $\beta>-\frac{1}{2}$ such that, for any $\Delta t>0$ every solution $u^k$ of (\ref{12}) satisfies:\\

$ c \Vert u^0\Vert_{X_{-\beta}\times X_{-\beta-\frac{1}{2}}}^2\leq \Delta t \displaystyle{\sum_{k\Delta t\in [0,T]}}\left\Vert B^*\left(\frac{\tilde{u}^{k+1}+u^k}{2}\right)\right\Vert_Y^2$\\

$\hspace{4cm} +\;\Delta t \displaystyle{\sum_{k\Delta t\in [0,T]}}(\Delta t)^2\left\Vert Au^{k+1}\right\Vert_H^2$\\
 \begin{equation}\label{13}
\hspace{5cm}+\;\Delta t\displaystyle{\sum_{k\Delta t\in [0,T]}}(\Delta t)^5\left\Vert {A^2}{u}^{k+1}\right\Vert_H^2.
\end{equation}
Here and in the sequel $c$ denotes a generic positive constant that may vary from line to line but is independent of $\Delta t$.\\

In this section, we show how to obtain the observability inequality (\ref{13}). Before giving spectral conditions to obtain  polynomial decay, we need to introduce some notations.\\\\

Since $A$ is a skew-adjoint operator with compact resolvent, its spectrum is discrete and $\sigma(A)=\{i\mu_j:\;j\in\mathbb{Z^*}\}$ where $(\mu_j)_{j\in\mathbb{Z^*}}$ is a sequence of real numbers such that $\vert \mu_j\vert\to\infty$  when $j\to\infty$. Set
$(\varphi_j)_{j\in\mathbb{Z^*}}$ an orthonormal basis of eigenvectors of $A$ associated to the eigenvalues $(i\mu_j)_{j\in\mathbb{Z^*}}$, that is\\
$A\varphi_j=i\mu_j\varphi_j$, with $\mu_j=
\left \{
\begin{array}{lcr}
            \sqrt{\eta_j}\;\;\text{if}\;j\in\mathbb{N}^*,\\

            -\sqrt{\eta_{-j}}\;\;\text{if}\;(-j)\in\mathbb{N}^*,
\end{array}
\right.
$ \\
and \;\;$\varphi_j=\frac{1}{\sqrt{2}}\left(\begin{array}{l}
\frac{1}{i\mu_j}\phi_j\\
\phi_j
\end{array}\right)$\;\;\;$\forall\; j\in\mathbb{Z}^*$,\;\;\;\; (  we define\\
 $\phi_{-j}=\phi_j,\;\forall j\in\mathbb{N}^*$)\\

where $\eta_j$ and $\phi_j$ are the  eigenvalues and the corresponding eigenvectors of $\mathcal{A}$.\\

Moreover, define

$$C_s = span \{\varphi_j:\;\text{the corresponding}\;i\mu_j\;\text{satisfies}\;\vert\mu_j\vert\leq s\}.$$
\\

Now, we recall some results about Discrete Ingham type inequalities.\\

\begin{teo}(\cite{14}) Assume that there exist a positive number $\gamma$ satisfying
\begin{equation}\label{14}
\vert\omega_{k}-\omega_n\vert \geq \gamma \;\; \text{for all}\; k\neq n,
\end{equation}
where $(\omega_k)_{k\in\mathbb{Z}}$ is a family of real numbers.\\
Given $0<\sigma\leq \pi/\gamma$ arbitrarily, fix an integer $J$ such that $J\sigma > \pi/\gamma$. Then there exist two positive constants $c_1$ and $c_2$, depending only on $\gamma$ and $J\sigma$, such that, for every $t\in\mathbb{R}$, we have
\begin{equation}\label{15}
\sigma \displaystyle{\sum_{j=-J}^{J}}\left\vert\sum_{k\in\mathbb{Z}} x_k e^{i\omega_k(t+ j\sigma)}\right\vert^2\sim \sum_{k\in\mathbb{Z}}\vert x_k\vert^2,
\end{equation}
with complex coefficients $x_k$ satisfying the condition
\begin{equation}\label{16}
 x_k=0 \;\;\text{whenever}\;\; \vert w_k\vert \geq \frac{\pi}{\sigma}-\frac{\gamma}{2},
\end{equation}
\end{teo}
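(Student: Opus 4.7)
This is a sampled (discrete-time) analogue of the classical Ingham inequality, so the overall strategy is to reduce it to Ingham's continuous inequality on the interval $[t-(J+\frac{1}{2})\sigma,\,t+(J+\frac{1}{2})\sigma]$ of length $(2J+1)\sigma$, which by the hypothesis $J\sigma>\pi/\gamma$ exceeds $2\pi/\gamma$ and therefore lies in the range where Ingham's inequality applies. The band-limitation (\ref{16}) on the $\omega_k$ will play the role of a Nyquist-type condition ensuring that the sampling sum faithfully reproduces the integral.

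Setting $f(t)=\sum_{k\in\mathbb{Z}}x_k e^{i\omega_k t}$ and expanding the square, the exchange of the finite sum in $j$ with the double sum in $k,n$ yields
$$
\sigma\sum_{j=-J}^{J}\vert f(t+j\sigma)\vert^{2}
= (2J+1)\sigma\sum_{k}\vert x_k\vert^{2}
+\sigma\sum_{k\neq n}x_k\overline{x_n}\,e^{i(\omega_k-\omega_n)t}\,D_J\bigl((\omega_k-\omega_n)\sigma\bigr),
$$
where $D_J(\theta)=\sin\bigl((J+\frac{1}{2})\theta\bigr)/\sin(\theta/2)$ is the Dirichlet kernel. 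The gap (\ref{14}) and the localization (\ref{16}) together force $\vert(\omega_k-\omega_n)\sigma\vert\in[\gamma\sigma,\,2\pi-\gamma\sigma]$ for $k\neq n$, hence $\vert\sin((\omega_k-\omega_n)\sigma/2)\vert\geq\sin(\gamma\sigma/2)>0$, and the denominator of $D_J$ is uniformly bounded below on all off-diagonal pairs.

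The technical heart of the proof is to show that the off-diagonal double sum is strictly dominated by the diagonal $(2J+1)\sigma\sum_k\vert x_k\vert^{2}$. A direct Cauchy--Schwarz with the crude bound $\vert D_J\vert\leq 1/\sin(\gamma\sigma/2)$ is far too loose when infinitely many $x_k$ are present. The correct strategy is Ingham's own kernel method in its discrete version: replace the uniform weights $1$ on $\{-J,\ldots,J\}$ by a nonnegative Fej\'er-type weight $\{\lambda_j\}$ whose trigonometric polynomial $\widehat{\lambda}(\theta)=\sum_j\lambda_j e^{ij\theta}$ is bounded below on a neighborhood of $\theta=0$ and vanishes (or is uniformly small) on the annulus $\{\gamma\sigma\leq\vert\theta\vert\leq 2\pi-\gamma\sigma\}$. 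The hypothesis $J\sigma>\pi/\gamma$ is precisely the frequency resolution needed for such a separating polynomial to exist; it plays the role of the critical length $T>2\pi/\gamma$ in the continuous theory.

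Once the lower bound is in hand, the upper bound of (\ref{15}) follows either by a dual construction (a trigonometric polynomial bounded \emph{above} on the spectrum), or by comparing the sampling sum with $\int_{t-(J+\frac{1}{2})\sigma}^{\,t+(J+\frac{1}{2})\sigma}\vert f(s)\vert^{2}\,ds$ through a Poisson/aliasing estimate and then invoking the continuous Ingham inequality. The main obstacle is the explicit construction and calibration of the discrete separating kernel, together with careful tracking of the dependence of the constants $c_1,c_2$ on $\gamma$ and $J\sigma$ so that the estimate is uniform in $t\in\mathbb{R}$.
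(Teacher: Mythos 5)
First, a point of comparison: the paper does not prove this theorem at all --- it is quoted verbatim from Komornik and Loreti \cite{14} and used as a black box. So the only question is whether your argument stands on its own, and as written it does not: it is an outline whose central step is named but never carried out. The skeleton is right, and it is essentially the same skeleton as the proof in \cite{14}: expand the square, isolate the diagonal term $(2J+1)\sigma\sum_k\vert x_k\vert^2$, observe that the gap condition (\ref{14}) and the band-limitation (\ref{16}) confine the reduced frequencies $\vert(\omega_k-\omega_n)\sigma\vert$ to the annulus $[\gamma\sigma,\,2\pi-\gamma\sigma]$ away from the aliasing points $0$ and $2\pi$, and then beat the off-diagonal sum with a weighted (Fej\'er/Ingham-type) kernel rather than the uniform weight. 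You also correctly note that a crude Cauchy--Schwarz with $\vert D_J\vert\le 1/\sin(\gamma\sigma/2)$ cannot work when infinitely many $x_k$ are present.

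However, the entire content of the theorem lives in the step you defer. You must actually exhibit a nonnegative weight sequence $\{\lambda_j\}_{\vert j\vert\le J}$ whose transform $\widehat{\lambda}(\theta)=\sum_j\lambda_j e^{ij\theta}$ is bounded below near $\theta=0$ and is dominated on the annulus in a way that is \emph{summable over the off-diagonal pairs}: mere smallness of $\widehat{\lambda}$ there is not enough, since for a fixed $k$ there are infinitely many $n$ with $x_n\neq 0$; one needs a quantitative decay of the type $\vert\widehat{\lambda}((\omega_k-\omega_n)\sigma)\vert\lesssim \vert\omega_k-\omega_n\vert^{-2}$ (suitably periodized), so that $\sum_{n\neq k}\vert\widehat{\lambda}((\omega_k-\omega_n)\sigma)\vert$ converges uniformly in $k$ thanks to the gap (\ref{14}). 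This is exactly where the hypothesis $J\sigma>\pi/\gamma$ must be spent and where the constants $c_1,c_2$ acquire their stated dependence on $\gamma$ and $J\sigma$ alone; none of it is done in your proposal. The upper bound is likewise only gestured at (``a dual construction, or a Poisson/aliasing estimate''), and the lower bound cannot be obtained by simple comparison with the continuous integral, as you seem to sense. As a roadmap your plan is faithful to \cite{14}; as a proof it has a genuine gap at precisely the point you yourself flag as the main obstacle.
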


\begin{teo}(\cite{15}) Assume that there exist a positive number $\gamma_1$ satisfying
\begin{equation}\label{17}
\omega_{k+2}-\omega_k\geq 2\gamma_1 \;\; \forall\; k,
\end{equation}
where $(\omega_k)_{k\in\mathbb{Z}}$ is a family of real numbers.\\
Given $0<\sigma_1\leq \pi/\gamma_1$ arbitrarily, fix an integer $J$ such that $J\sigma_1 > \pi/\gamma_1$. Then there exist two positive constants $c_3$ and $c_4$, depending only on $\gamma_1$ and $J\sigma_1$,
\begin{equation}\label{18}
c_3 Q(x)\leq \sigma_1 \displaystyle{\sum_{j=-J}^{J}}\left\vert\sum_{k\in\mathbb{Z}} x_k e^{i\omega_k j\sigma_1}\right\vert^2\leq c_4 Q(x),
\end{equation}
with complex coefficients $x_k$ satisfying the condition
\begin{equation}\label{19}
 x_k=0 \;\;\text{whenever}\;\; \vert w_k\vert \geq \frac{\pi}{\sigma_1}-\frac{\gamma_1}{2},
\end{equation}
and where
$$ Q(x)=\sum_{k\in \mathbb{A}_1}\vert x_k\vert^2+\sum_{k\in \mathbb{A}_2}\vert x_k + x_{k+1}\vert^2 + (\omega_{k+1}-\omega_k)^2(\vert x_k\vert^2+\vert x_{k+1}\vert^2).$$
\end{teo}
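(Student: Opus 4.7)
The plan is to adapt the generalized Ingham inequality of Baiocchi--Komornik--Loreti type (designed for the weakened gap condition \eqref{17}) to the discrete sampling setting, using Theorem 1 as a black box for the well-separated part of the spectrum.

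First I would exploit \eqref{17} to partition the index set. Since $\omega_{k+2}-\omega_k\geq 2\gamma_1$, whenever $\omega_{k+1}-\omega_k<\gamma_1$ one must have $\omega_{k+2}-\omega_{k+1}>\gamma_1$: close pairs cannot overlap, so every ``cluster'' has at most two elements. This is exactly what the decomposition $\mathbb{Z}=\mathbb{A}_1\cup\mathbb{A}_2$ encodes: $\mathbb{A}_2$ labels the left element of each close pair and $\mathbb{A}_1$ the remaining isolated frequencies. On each pair $(k,k+1)$ with $k\in\mathbb{A}_2$, I would use the algebraic identity
\begin{equation*}
x_k e^{i\omega_k j\sigma_1}+x_{k+1}e^{i\omega_{k+1} j\sigma_1}=(x_k+x_{k+1})e^{i\omega_k j\sigma_1}+x_{k+1}\bigl(e^{i\omega_{k+1} j\sigma_1}-e^{i\omega_k j\sigma_1}\bigr),
\end{equation*}
so as to replace the pair by a single ``principal'' exponential with coefficient $x_k+x_{k+1}$, plus a small residual whose amplitude scales with $\omega_{k+1}-\omega_k$.

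For the upper bound $\sigma_1\sum_j|\cdot|^2\leq c_4 Q(x)$, I would plug this decomposition into the triangle inequality, bound $|e^{i\omega_{k+1} j\sigma_1}-e^{i\omega_k j\sigma_1}|\leq |\omega_{k+1}-\omega_k|\,|j\sigma_1|$ on the bounded window $|j|\leq J$, and then apply Theorem 1 to the reduced family formed by $\{\omega_k:k\in\mathbb{A}_1\}$ together with one representative per close pair. By construction this reduced family satisfies a genuine uniform gap of size at least $\gamma_1$, so \eqref{15} yields the $\sum_{\mathbb{A}_1}|x_k|^2+\sum_{\mathbb{A}_2}|x_k+x_{k+1}|^2$ contributions, and the residuals contribute the $(\omega_{k+1}-\omega_k)^2(|x_k|^2+|x_{k+1}|^2)$ terms.

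The main obstacle will be the lower bound $c_3Q(x)\leq \sigma_1\sum_j|\cdot|^2$, because the change of variables $(x_k,x_{k+1})\mapsto(x_k+x_{k+1},(\omega_{k+1}-\omega_k)x_{k+1})$ becomes singular as the intra-cluster gap tends to zero, and one must recover $|x_k|^2+|x_{k+1}|^2$ from $|x_k+x_{k+1}|^2$ plus the residual. My strategy is a two-step argument: first apply Theorem 1 to the principal part to obtain a lower bound in terms of $|x_k+x_{k+1}|^2$ and $|x_k|^2$ for $k\in\mathbb{A}_1$, absorbing the residual via $|\omega_{k+1}-\omega_k|\leq \gamma_1$ and an $\varepsilon$-Young inequality; second, add back a controlled multiple of $\sigma_1\sum_j|\cdot|^2$ in order to re-inject the $(\omega_{k+1}-\omega_k)^2|x_{k+1}|^2$ terms on the right-hand side, using that on each close pair the two-dimensional linear system expressing $(x_k,x_{k+1})$ from $(x_k+x_{k+1},(\omega_{k+1}-\omega_k)x_{k+1})$ has determinant $\omega_{k+1}-\omega_k$. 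The support condition \eqref{19}, together with $J\sigma_1>\pi/\gamma_1$, ensures that Theorem 1 can be invoked at each stage of this bootstrap.
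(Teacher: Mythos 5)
This statement is not proved in the paper at all: it is quoted verbatim from Komornik--Loreti \cite{15} as an external ingredient, so there is no internal proof to compare against. Judged on its own merits, your proposal correctly identifies the structural skeleton of the known proof: the weakened gap condition \eqref{17} forces clusters of at most two frequencies, the decomposition $\mathbb{Z}=\mathbb{A}_1\cup\mathbb{A}_2$ encodes this, and the natural coordinates on a close pair are $(x_k+x_{k+1},\,(\omega_{k+1}-\omega_k)x_{k+1})$, i.e.\ the first divided difference --- which is exactly what the weight $Q(x)$ and the matrices $B_k$ in the paper reflect.

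The gap is in how you propose to close the estimates, most seriously the lower bound. You treat the residual $x_{k+1}\bigl(e^{i\omega_{k+1}j\sigma_1}-e^{i\omega_k j\sigma_1}\bigr)$ as a perturbation to be absorbed into the lower bound for the principal part via an $\varepsilon$-Young inequality. This fails on the extreme configuration $x_{k+1}=-x_k$ with $|x_k|$ large: the principal coefficient $x_k+x_{k+1}$ vanishes, Theorem 1 applied to the reduced family gives a lower bound of zero on that pair, and the entire contribution $(\omega_{k+1}-\omega_k)^2|x_{k+1}|^2$ of $Q(x)$ must be recovered from the residual alone; there is nothing to absorb it into. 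The second step of your bootstrap (``add back a controlled multiple of $\sigma_1\sum_j|\cdot|^2$'') presupposes precisely the lower bound you are trying to prove for the divided-difference component. What is actually required --- and what the Komornik--Loreti argument supplies --- is an almost-orthogonality (Gram-matrix) estimate for the \emph{combined} family consisting of the exponentials $e^{i\omega_k j\sigma_1}$ for $k\in\mathbb{A}_1$ together with, for each $k\in\mathbb{A}_2$, the pair $e^{i\omega_k j\sigma_1}$ and $\bigl(e^{i\omega_{k+1}j\sigma_1}-e^{i\omega_k j\sigma_1}\bigr)/(\omega_{k+1}-\omega_k)$; the inter-cluster gap $\geq\gamma_1$ makes the Gram matrix of this system uniformly invertible, and that is where both constants $c_3,c_4$ come from. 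Your upper bound has the same (less fatal) defect: the termwise estimate $|e^{i\omega_{k+1}j\sigma_1}-e^{i\omega_kj\sigma_1}|\leq(\omega_{k+1}-\omega_k)J\sigma_1$ followed by the triangle inequality over all $k\in\mathbb{A}_2$ loses a factor of the number of clusters; you need a Bessel-type inequality for the divided-difference family, not a pointwise bound. Neither estimate can be extracted from Theorem 1 used as a black box on the principal part alone.
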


The above equivalence means that ( see \cite{20} for more details)
$$ \sigma_1 \displaystyle{\sum_{j=-J}^{J}}\left\vert\sum_{k\in\mathbb{Z}} x_k e^{i\omega_k j\sigma}\right\vert^2\geq c_3\sum_{l=1}^{2}\sum_{k\in \mathbb{A}_l} \Vert B_{k}^{-1}C_{k}\Vert_2^2,$$
where $\Vert .\Vert_2$ means the Euclidean norm of the vector, for $k\in \mathbb{A}_l$ the vector $C_{k}$ and the $l\times l$ matrix $B_{k} $ are given by\\

$$C_{k}=x_k,\; B_{k}^{-1}=1\;\;\text{if}\;\; l=1,$$
and
$$C_{k}=\left( \begin{array}{c}
x_k \\
x_{k+1}
\end{array} \right),\;B_{k}^{-1}=\left( \begin{array}{c}
1\;\;\;\;\;\;\;\;\;\;\;\;\;\;\;\; 1\\
0 \;\;\;\;\;\;\;\;\;\;\;\omega_{k+1}-\omega_k\end{array} \right)\;\;\;\text{if}\;\; l=2.$$

Now, as in \cite{20}, let $U$ be a separable Hilbert space (in the sequel, $U$ will be $Y$ ). For a vector $d=\left( \begin{array}{c}
d_1\\
.\\
.\\
.\\
d_m\end{array} \right)\in U^m$, we set $\Vert .\Vert_{U,2}$ the norm in $U^m$ defined by
$$\Vert d\Vert_{U,2}^2=\sum_{j=l}^{m}\Vert d_j\Vert_U^2.$$
Let $\delta\in(0,\delta_0)$ where $\delta_0=\min(\pi-\frac{\Delta t \gamma}{2}, \pi-\frac{\Delta t \gamma_1}{2})$.\\

Then we obtain the discrete inequality of Ingham's type in $U$ :\\\\\\

\begin{pro}
If $\mu_n$ satisfy  (\ref{14}), then for all sequence $(a_n)_n$ in $U$, the function
$$x^k=\sum_{\vert \mu_n\vert\leq\delta/\Delta t} a_n e^{ik\Delta t \mu_n}$$
satisfies the estimates
$$\Delta t \sum_{k\Delta t\in[0,T]}\Vert x^k\Vert_U^2\sim\sum_{\vert \mu_n\vert\leq\delta/\Delta t}\Vert a_n\Vert_U^2,$$
for $T>T_0=\frac{2\pi}{\gamma}$.
\end{pro}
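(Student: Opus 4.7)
The plan is to reduce the vector-valued equivalence to the scalar Ingham-type inequality (Theorem 1.1) applied coordinate by coordinate in an orthonormal basis of the separable Hilbert space $U$, and then to put the pieces back together by Parseval.

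Fix an orthonormal basis $(e_l)_{l\ge 1}$ of $U$ and expand $a_n=\sum_l a_{n,l}\,e_l$ with $a_{n,l}=\langle a_n,e_l\rangle_U\in\mathbb{C}$. Setting
$$x^k_l=\sum_{|\mu_n|\le \delta/\Delta t} a_{n,l}\,e^{ik\Delta t\,\mu_n},$$
Parseval gives $\Vert x^k\Vert_U^2=\sum_l |x^k_l|^2$ and $\Vert a_n\Vert_U^2=\sum_l |a_{n,l}|^2$. It therefore suffices to prove the scalar equivalence
$$\Delta t\sum_{k\Delta t\in[0,T]}|x^k_l|^2\;\sim\;\sum_{|\mu_n|\le \delta/\Delta t}|a_{n,l}|^2,$$
with constants independent of $l$ and of $\Delta t$, and then to sum over $l$.

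For this scalar estimate I apply Theorem 1.1 with $\omega_n=\mu_n$, step $\sigma=\Delta t$, and $J$ of order $T/(2\Delta t)$. The gap hypothesis (\ref{14}) is the standing assumption, and the truncation condition (\ref{16}) is verified by the very definition of $\delta_0$: since $\delta<\delta_0\le\pi-\Delta t\gamma/2$, one has $|\mu_n|\le\delta/\Delta t<\pi/\sigma-\gamma/2$. The symmetric sum $\sum_{j=-J}^{J}$ in (\ref{15}) is converted into a one-sided sum $\sum_{k=0}^{2J}$ by choosing $t=J\Delta t$, since the resulting phase $e^{iJ\Delta t\,\mu_n}$ has modulus one and leaves the right-hand side unchanged. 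The hypothesis $T>T_0=2\pi/\gamma$ ensures $J\sigma\approx T/2>\pi/\gamma$, as required by Theorem 1.1.

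The one technical point that needs care is the mismatch between the Ingham window $[0,2J\Delta t]$ and the actual window $[0,T]$. For the lower bound I take $J=\lfloor T/(2\Delta t)\rfloor$, so that $[0,2J\Delta t]\subset[0,T]$ and the Ingham lower bound already controls a subsum of $\Delta t\sum_{k\Delta t\in[0,T]}|x^k_l|^2$. For the upper bound I take $J=\lceil T/(2\Delta t)\rceil$, so that $[0,T]\cap\Delta t\mathbb{Z}\subset[0,2J\Delta t]$ and the Ingham upper bound dominates the desired sum. In both cases the Ingham constants depend only on $\gamma$ and on $J\sigma\in[T/2-\Delta t,T/2+\Delta t]$, an interval which stays bounded away from $\pi/\gamma$ because $T>T_0$, so the constants are uniform in $\Delta t$ as $\Delta t\to 0$. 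Summing the resulting scalar equivalence over $l\ge 1$ yields the proposition.
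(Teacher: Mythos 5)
Your proposal is correct and follows essentially the same route as the paper: expand in an orthonormal basis of $U$, use Parseval to reduce to the scalar discrete Ingham inequality coordinate by coordinate, and sum the resulting equivalences (the paper does this via a finite truncation $a_n^{(K)}$ and a passage to the limit, which is equivalent to your direct summation over $l$ by monotonicity). Your additional care with the choice of $J$, the shift $t=J\Delta t$ to convert the symmetric Ingham window into $[0,T]$, and the verification of the truncation condition via $\delta<\delta_0$ fills in details the paper leaves implicit, but it is the same argument.
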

\begin{proof}
Since $U$ is a separable Hilbert space, there exists a Hilbert basis $(\psi_j)_{j\geq 1}$ of $U$. Therefore, $a_n\in U$ can be written as
$$a_n=\sum_{j=1}^{+\infty}a_n^j\psi_j.$$
We truncate $a_n$ as follows: for $K\in\mathbb{N}^*$, let $a_n^{(K)}=\displaystyle{\sum_{j=1}^{K}} a_n^j\psi_j$ \\
and set $x_K^k=\displaystyle{\sum_{j=1}^{K}}\left(\displaystyle{\sum_{\vert \mu_n\vert\leq\delta/\Delta t}} a_n^j e^{ik\Delta t \mu_n}\right)\psi_j.$\\
Since $(\psi_j)_{j\geq 1}$ is a Hilbert basis, we have by Parseval's theorem
$$\Vert x_K^k\Vert_U^2=\displaystyle{\sum_{j=1}^{K}}\left\vert\;\;\displaystyle{\sum_{\vert \mu_n\vert\leq\delta/\Delta t}}a_n^j e^{ik\Delta t \mu_n}\right\vert^2.$$
Thus, by applying discrete Ingham type inequality, we have
$$\Delta t \displaystyle{\sum_{k\Delta t\in[0,T]}}\Vert x_K^k\Vert_U^2=\Delta t \displaystyle{\sum_{j=1}^{K}}\;\;\displaystyle{\sum_{k\Delta t\in[0,T]}}\left\vert\displaystyle{\sum_{\vert \mu_n\vert\leq\delta/\Delta t}}a_n^j e^{ik\Delta t \mu_n}\right\vert^2.$$\\

$$\sim\displaystyle{\sum_{j=1}^{K}} \;\;\displaystyle{\sum_{\vert \mu_n\vert\leq\delta/\Delta t}}(a_n^j)^2.$$
$$\sim\displaystyle{\sum_{\vert \mu_n\vert\leq\delta/\Delta t}}\;\;\displaystyle{\sum_{j=1}^{K}}(a_n^j)^2.$$
Therefore
$$\Delta t \displaystyle{\sum_{k\Delta t\in[0,T]}}\Vert x_K^k\Vert_U^2\sim\displaystyle{\sum_{\vert \mu_n\vert\leq\delta/\Delta t}}\Vert a_n^{(K)}\Vert_U^2.$$
Since $x_K^k\to x^k$ and $a_n^{(K)}\to a_n$ when $K\to +\infty$, we obtain the result.
\end{proof}
\begin{cor}
With the same hypothesis of Theorem 2.2, for all sequence $(a_n)_{n\in\mathbb{Z^*}}$ in $U$, the function
$$ f^k=\sum_{n\in\mathbb{Z^*}} a_n e^{i\omega_n k\Delta t},$$
satisfy, for $T>T_1=\frac{2\pi}{\gamma_1}$, the inequality
$$\Delta t \displaystyle{\sum_{k\Delta t\in[0,T]}}\Vert f^k \Vert_U^2\geq c \sum_{l=1}^{2}\sum_{n\in\mathbb{A}_l} \Vert B_{n}^{-1}C_{n}\Vert_{U,2}^2,$$
with $ a_n=0 \;\;\text{whenever}\;\; \vert w_n\vert \geq \frac{\pi}{\Delta t}-\frac{\gamma_1}{2},$ and $c>0$.
\end{cor}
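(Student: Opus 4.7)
The plan is to mimic the strategy employed for Proposition 2.1, reducing the $U$-valued statement to the scalar version (Theorem 2.2) via an expansion in an orthonormal basis of $U$ followed by Parseval's identity. Since Theorem 2.2 already delivers the desired lower bound in terms of the quadratic form built from the $B_n^{-1}C_n$, only the component-wise transfer of that bound has to be carried out.

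Concretely, I would fix a Hilbert basis $(\psi_j)_{j\geq 1}$ of $U$, expand $a_n=\sum_{j\geq 1}a_n^j\psi_j$, truncate to $a_n^{(K)}=\sum_{j=1}^K a_n^j\psi_j$, and define $f_K^k$ accordingly as in the proof of Proposition 2.1. Parseval gives
$$\Vert f_K^k\Vert_U^2=\sum_{j=1}^{K}\left\vert\sum_{n\in\mathbb{Z}^*}a_n^j\, e^{i\omega_n k\Delta t}\right\vert^2.$$
Summing over $k\Delta t\in[0,T]$ with $T>T_1=2\pi/\gamma_1$, applying Theorem 2.2 for each fixed $j$ with $\sigma_1=\Delta t$ (the support restriction (\ref{19}) is precisely the hypothesis on $(a_n)$), and swapping the two finite sums produces
$$\Delta t\sum_{k\Delta t\in[0,T]}\Vert f_K^k\Vert_U^2 \geq c\sum_{l=1}^{2}\sum_{n\in\mathbb{A}_l}\sum_{j=1}^{K}\Vert B_n^{-1}C_n^{(j)}\Vert_2^2,$$
where $C_n^{(j)}$ denotes the scalar vector assembled from the $j$-th coordinates of $a_n$ (and $a_{n+1}$ when $l=2$). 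Crucially, the constant $c$ supplied by Theorem 2.2 depends only on $\gamma_1$ and $J\sigma_1=J\Delta t$, hence is independent of $j$.

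The last step rests on the observation that $B_n^{-1}$ is a scalar $l\times l$ matrix acting coordinate-wise on the $U^l$-valued vector $C_n$, so its entries on $C_n^{(j)}$ are linear combinations of $a_n^j$ and $a_{n+1}^j$ with coefficients independent of $j$. A second Parseval identity then gives $\sum_{j\geq 1}\Vert B_n^{-1}C_n^{(j)}\Vert_2^2=\Vert B_n^{-1}C_n\Vert_{U,2}^2$, and letting $K\to+\infty$, using the convergence $f_K^k\to f^k$ in $U$ together with the monotonicity of the right-hand side in $K$, yields the claimed inequality. The main subtlety I anticipate is purely bookkeeping: one has to verify that $B_n^{-1}$ indeed commutes with the orthogonal projections onto the subspaces spanned by the $\psi_j$, so that the two Parseval expansions on the left and on the right identify with one another; once this observation is made, the proof is a direct transcription of the argument used in Proposition 2.1.
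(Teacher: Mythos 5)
Your proposal is correct and follows exactly the route the paper intends: the paper states Corollary 2.1 without an explicit proof, leaving it as the same Hilbert-basis/Parseval reduction to the scalar inequality that is written out for Proposition 2.1, which is precisely what you carry out (including the key observation that $B_n^{-1}$ has scalar entries, so the second Parseval identity $\sum_{j}\Vert B_n^{-1}C_n^{(j)}\Vert_2^2=\Vert B_n^{-1}C_n\Vert_{U,2}^2$ holds). The only point you share with the paper's own level of detail is the implicit use of a time-shifted version of Theorem 2.2 to pass from the symmetric sum over $j\in\{-J,\dots,J\}$ to the sum over $k\Delta t\in[0,T]$; this is standard and does not affect the argument.
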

\begin{pro}
Assume that $Y$ is separable. Let $y^k$ the solution of the following system
\begin{equation}\label{20}
\left \{
\begin{array}{lcr}

    \frac{y^{k+1}-y^{k}}{\Delta t}=A\left(\frac{y^{k+1}+y^{k}}{2}\right),\;\; k\in\mathbb{N},\\

             y^{0}=z_{0}=(w_0, w_1).
\end{array}
\right.
\end{equation}
\begin{enumerate}
\item
Assume that $\mu_j$ satisfy (\ref{14}) and  for all $y^0 \in V\times X$ we have
\begin{equation}\label{21}
\exists\; \theta>0,\;\forall\;j\geq 1,\;\Vert B^*\varphi_j\Vert_Y\geq \frac{\theta}{\mu_j^{2\beta+1}}
\end{equation}
for some constant $\theta>0$ and for a fixed real number $\beta> -\frac{1}{2}$.\\

Then, there exist a time $T > T_0$ and a constant $C > 0$ such that

\begin{equation}\label{22}
C\Vert y^{0}\Vert_{X_{-\beta}\times X_{-\beta-\frac{1}{2}}}^2 \leq \Delta t \displaystyle { \sum_{k \Delta t \in \left[0,T \right]}
}\left\Vert B^*\left(\frac{y^k+y^{k+1}}{2}\right)\right\Vert_Y^2,\; \forall\; y^0 \in C_{\delta/ \Delta t}.
\end{equation}

\item
Assume that $\mu_n$ verify (\ref{17}) and, for all $y^0 \in V\times X$,
\begin{equation}\label{23}
\exists\; \theta>0,\;\forall\;l=1,2,\;\;\forall\;n\in \mathbb{A}_l,\;\forall\;\xi\in\mathbb{R}^2,\;\Vert B_n^{-1}\phi_n\xi\Vert_{Y,2}\geq \frac{\theta}{\mu_n^{2\beta+1}}\Vert \xi\Vert_2,
\end{equation}
\\

then there exist a time $T > T_1$ and a constant $C > 0$ such that (\ref{22}) holds true.\\

\end{enumerate}

\end{pro}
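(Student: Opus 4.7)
The plan is to diagonalize the conservative midpoint iteration in the eigenbasis $(\varphi_n)$ of $A$ and then apply the discrete Ingham estimate of Proposition~2.1 (respectively Corollary~2.1) with $U=Y$. Since $y^{0}\in C_{\delta/\Delta t}$, expand $y^{0}=\sum_{|\mu_n|\leq\delta/\Delta t} a_n\varphi_n$. The relation $A\varphi_n=i\mu_n\varphi_n$ diagonalizes (\ref{20}): each mode is advanced by the unimodular multiplier $(1+i\Delta t\mu_n/2)/(1-i\Delta t\mu_n/2)=e^{i\Delta t\omega_n}$ with
\[
\omega_n:=\tfrac{2}{\Delta t}\arctan(\Delta t\mu_n/2),
\]
so that $\tfrac{y^{k}+y^{k+1}}{2}=\sum a_n c_n e^{ik\Delta t\omega_n}\varphi_n$ with $c_n=\cos(\Delta t\omega_n/2)$. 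Hence the sum on the right of (\ref{22}) equals $\Delta t\sum_{k\Delta t\in[0,T]}\Vert\sum_n(a_n c_n B^{*}\varphi_n)e^{ik\Delta t\omega_n}\Vert_Y^2$, which is exactly the object controlled by Proposition~2.1 in $U=Y$.

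Next, the hypotheses of that proposition must be transferred from $\mu_n$ to $\omega_n$. The map $\mu\mapsto\omega(\mu)$ is smooth and strictly increasing with derivative $(1+(\Delta t\mu/2)^{2})^{-1}$ bounded between two positive constants on the low-frequency set $|\Delta t\mu|\leq\delta<\pi$. Consequently the uniform gap (\ref{14}) (resp.\ the gap-$2$ condition (\ref{17})) on $(\mu_n)$ yields an analogous gap on the restricted $(\omega_n)$ depending only on $\gamma$ (resp.\ $\gamma_1$) and $\delta$, hence uniform in $\Delta t$; the cutoff $|\omega_n|\leq(2/\Delta t)\arctan(\delta/2)<\pi/\Delta t$ leaves the safety margin required by (\ref{16})/(\ref{19}); and the scalar factor $c_n$ is bounded below by $\cos(\arctan(\delta/2))>0$, so it can be absorbed into the Ingham constants.

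Applying Proposition~2.1 in case~(1) then gives, for $T>T_0$,
\[
\Delta t\sum_{k\Delta t\in[0,T]}\Bigl\Vert B^{*}\tfrac{y^{k}+y^{k+1}}{2}\Bigr\Vert_Y^2\;\geq\;c\sum_{|\mu_n|\leq\delta/\Delta t}|a_n|^{2}\,\Vert B^{*}\varphi_n\Vert_Y^2,
\]
and hypothesis (\ref{21}) bounds $\Vert B^{*}\varphi_n\Vert_Y^2\geq\theta^{2}\mu_n^{-4\beta-2}$. A direct computation from $\varphi_n=\frac{1}{\sqrt 2}(\phi_n/(i\mu_n),\phi_n)^{\top}$, using $\mathcal{A}\phi_n=\mu_n^{2}\phi_n$ and orthonormality of $(\phi_n)$ in $X$, yields $\Vert\varphi_n\Vert_{X_{-\beta}\times X_{-\beta-1/2}}^{2}=\mu_n^{-4\beta-2}$, so $\Vert y^{0}\Vert_{X_{-\beta}\times X_{-\beta-1/2}}^{2}=\sum|a_n|^{2}\mu_n^{-4\beta-2}$ and the exponents match to give (\ref{22}). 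Case~(2) proceeds identically, with Corollary~2.1 replacing Proposition~2.1 and $C_n=(a_nc_nB^{*}\varphi_n,a_{n+1}c_{n+1}B^{*}\varphi_{n+1})^{\top}$; hypothesis (\ref{23}) is tailored to produce the lower bound $\Vert B_n^{-1}C_n\Vert_{Y,2}^{2}\gtrsim \theta^{2}\mu_n^{-4\beta-2}(|a_n c_n|^{2}+|a_{n+1}c_{n+1}|^{2})$, which sums to the same right-hand side.

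The main obstacle is making the nonlinear reparametrization $\mu_n\mapsto\omega_n$ harmless: both the gap and the truncation level must survive uniformly in $\Delta t$, which is precisely why $\delta$ is restricted to $(0,\delta_0)$ with $\delta_0\leq\pi-\Delta t\gamma/2$ in the definition of $C_{\delta/\Delta t}$. Once this bookkeeping is settled, the observability on $C_{\delta/\Delta t}$ reduces to a Parseval-type computation in the eigenbasis combined with the low-frequency lower bound on $\Vert B^{*}\varphi_n\Vert_Y$.
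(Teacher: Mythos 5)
Your proposal is correct and follows essentially the same route as the paper: diagonalize the midpoint scheme in the eigenbasis of $A$, note that each mode evolves with frequency $\alpha_j=\frac{2}{\Delta t}\arctan(\mu_j\Delta t/2)$ and a cosine amplitude factor bounded below on $C_{\delta/\Delta t}$, verify that the gap condition transfers from $(\mu_j)$ to $(\alpha_j)$, apply the vector-valued discrete Ingham inequality (Proposition~2.1, resp.\ Corollary~2.1) in $U=Y$, and conclude via (\ref{21}) (resp.\ (\ref{23})) together with the identity $\Vert\varphi_j\Vert_{X_{-\beta}\times X_{-\beta-1/2}}^2\sim\mu_j^{-4\beta-2}$. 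Your treatment of the reparametrization $\mu_n\mapsto\omega_n$ and of the truncation margin is in fact slightly more explicit than the paper's, which simply asserts $\vert\alpha_k-\alpha_n\vert\geq\gamma/2$ for $\Delta t$ small.
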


\begin{rem}
In the last proposition, we have chosen $\delta/ \Delta t$ the filtering parameter. Indeed, this scale is linked with the paper \cite{8}. The question of  optimality of this choice, in our case, remains open.
\end{rem}

\begin{proof}
We first show that (\ref{21})$\Longrightarrow$ (\ref{22}).\\
Simple formal calculations give
$$y^{k+1}=(I+\frac{\Delta t}{2}A)^{-1}(I-\frac{\Delta t }{2}A)y^k$$
$\hspace{3.8cm}=e^{i\alpha_j\Delta t}y^k,$\\

where $e^{i\alpha_j\Delta t}=\frac{1+\frac{\Delta t}{2}i\mu_j}{1-\frac{\Delta t}{2}i\mu_j}.$\\
Writing
$$y^0=\displaystyle{\sum_{\vert \mu_j\vert\leq\delta/\Delta t}}c_j \varphi_j=\left(\begin{array}{c} w_0\\
w_1\end{array}\right)=\frac{1}{\sqrt{2}}\left(\begin{array}{c} \displaystyle{\sum_{\vert \mu_j\vert\leq\delta/\Delta t}}\frac{1}{i\mu_j}c_j\phi_j\\
\displaystyle{\sum_{\vert \mu_j\vert\leq\delta/\Delta t}}c_j\phi_j\end{array}\right),$$
We have $\Vert w_0\Vert_{X_{-\beta}}^2=\Vert w_1\Vert_{X_{-\beta-\frac{1}{2}}}^2\sim\frac{1}{2}\displaystyle{\sum_{\vert \mu_j\vert\leq\delta/\Delta t}} c_j^2\eta_j^{-2\beta-1}.$\\

The solution $y^k$  is given by
$$y^k=\displaystyle{\sum_{\vert \mu_j\vert\leq\delta/\Delta t} }c_j e^{i\alpha_j k\Delta t}\varphi_j,$$
where
$$ \alpha_j=\frac{2}{\Delta t}\arctan(\frac{\mu_j\Delta t}{2}).$$
Consequently
$$B^*(\frac{y^{k+1}+y^k}{2})=\displaystyle{\sum_{\vert \mu_j\vert\leq\delta/\Delta t}}c_j\cos (\frac{\alpha_j\Delta t}{2})e^{i\alpha_j (k+\frac{1}{2})\Delta t} B^*\varphi_j .$$
It is easy to check that $\vert\alpha_k-\alpha_n\vert\geq \gamma\prime=\frac{\gamma}{2}$ (for all $k\neq n$) for $\Delta t $ sufficiently small, and
$$\cos^2(\frac{\alpha_j\Delta t}{2})=\cos^2(\arctan(\frac{\mu_j\Delta t}{2}))=\frac{1}{1+\frac{(\mu_j\Delta t)^2}{4}}\geq \frac{1}{1+\frac{\delta^2}{4}}.$$
Now, using Ingham's inequality in $Y$, for $T> T_0$,  we get\\

$$\Delta t \displaystyle { \sum_{k \Delta t \in \left[0,T \right]}
}\left\Vert B^*\left(\frac{y^k+y^{k+1}}{2}\right)\right\Vert_Y^2 \geq C_1 \displaystyle\sum_{\vert \mu_j\vert\leq\delta/\Delta t} c_j^2 \Vert \mathcal{B}^*\phi_j\Vert_Y^2.$$\\
By (\ref{21}), we get
$$\Delta t \displaystyle { \sum_{k \Delta t \in \left[0,T \right]}
}\left\Vert B^*\left(\frac{y^k+y^{k+1}}{2}\right)\right\Vert_Y^2 \geq C_1 \displaystyle{\sum_{\vert \mu_j\vert\leq\delta/\Delta t}} c_j^2 \mu_j^{-2(2\beta+1)}$$\\

$$\hspace{4cm}=C_1\left\Vert y^{0}\right\Vert_{X_{-\beta}\times X_{-\beta-\frac{1}{2}}}^2.$$
The proof of (\ref{23})$\Longrightarrow$ (\ref{22}) is similar to the first one but now we use the other discrete Ingham type inequality presented in Corollary 2.1.   \end{proof}

Applying Proposition 2.2, for any $\delta >0$ defined as above, choosing a time  $T^*>T_2=\max (T_0,T_1)$ there exists a positive constant $C=C_{T^*,\delta}$ such that the inequality (\ref{22}) holds for any solution $y^k$ with $y^0\in C_{\delta/\Delta t}$ and where the gap condition (\ref{14}) or (\ref{17}) is satisfied. In the sequel, we fix $T^*=2T_2$.

\begin{lem}
If $\mu_j$ verify  (\ref{14}) or (\ref{17}) , then there exists a constant $c > 0$ such that (\ref{13}) holds with $T=T^*$ for all solutions $u^k$
of (\ref{12}) uniformly with respect to $\Delta t$.
\end{lem}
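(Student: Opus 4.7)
The plan is to decompose $u^k$ into its low- and high-frequency parts relative to the cutoff $\delta/\Delta t$ and to handle the two regimes by different mechanisms. Let $P_L$ be the spectral projection onto $C_{\delta/\Delta t}$ and $P_H = I - P_L$; write $u^k = u_L^k + u_H^k$. Since $A$ and the iteration of (\ref{12}) both commute with these projections, $u_L^k$ and $u_H^k$ separately solve (\ref{12}) with initial data $P_L u^0$ and $P_H u^0$. On each eigenmode the scheme acts as $c_j^{k+1} = \rho_j e^{i\alpha_j\Delta t} c_j^k$ with $\rho_j = (1+(\Delta t)^3\mu_j^2)^{-1}$, so it is conservative up to the multiplicative factor $\rho_j \in (0,1)$ produced by the viscosity step. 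All three norms on the right-hand side of (\ref{13}) and the target norm $\|u^0\|^2_{X_{-\beta}\times X_{-\beta-\frac{1}{2}}}$ split additively in this orthogonal decomposition, so it suffices to control the two pieces separately.

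For the high-frequency part I exploit pure viscous dissipation. Setting $\tau_j = (\Delta t)^3\mu_j^2$ and $N = [T^*/\Delta t]$, a direct spectral computation shows that the two viscosity terms of (\ref{13}) sum to $\sum_{|\mu_j|>\delta/\Delta t}|c_j^0|^2 (1+\tau_j)(1-\rho_j^{2N})/(2+\tau_j)$. On the high-frequency set $N\tau_j \geq \delta^2 T^*$, which forces $1-\rho_j^{2N}$ to be bounded below by a positive constant uniformly in $\Delta t$; combined with $(1+\tau)/(2+\tau)\geq 1/2$, this yields a lower bound of the form $c_0\|u_H^0\|_H^2$, and since $|\mu_j|$ is bounded away from zero this controls $\|u_H^0\|^2_{X_{-\beta}\times X_{-\beta-\frac{1}{2}}}$ as well.

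For the low-frequency part I compare $u_L^k$ to the pure midpoint solution $y^k$ of (\ref{20}) with $y^0 = u_L^0 \in C_{\delta/\Delta t}$. By Proposition~2.2, $C\|u_L^0\|^2_{X_{-\beta}\times X_{-\beta-\frac{1}{2}}} \leq \Delta t \sum \|B^*(y^{k+1}+y^k)/2\|_Y^2$. The spectral expressions for $(y^{k+1}+y^k)/2$ and $(\tilde u_L^{k+1}+u_L^k)/2$ differ only by the extra factor $\rho_j^k$ on each mode, so the discrepancy equals $\sum_{|\mu_j|\leq\delta/\Delta t} c_j^0 (1-\rho_j^k)\cos(\alpha_j\Delta t/2) e^{i\alpha_j(k+1/2)\Delta t}\varphi_j$. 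Using $|1-\rho_j^k|\leq k\tau_j$, the bound $\|B^*\|\leq K_B$, and a mode-by-mode case split on whether $N\tau_j \leq 1$ or not, one checks that the $L^2$-in-time error is bounded by a constant times $(\Delta t)^3\sum_k\|A u_L^{k+1}\|^2$, a term already present on the right-hand side of (\ref{13}). Combining this estimate with Proposition~2.2 via a triangle inequality then yields the desired lower bound on $\Delta t\sum\|B^*(\tilde u_L^{k+1}+u_L^k)/2\|^2 + (\Delta t)^3\sum\|A u_L^{k+1}\|^2$ by $\|u_L^0\|^2_{X_{-\beta}\times X_{-\beta-\frac{1}{2}}}$.

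Finally, to pass from the $L/H$-decomposition to the full $u$, I use the elementary inequality $\|B^*(\tilde u^{k+1}+u^k)/2\|^2 \geq \tfrac{1}{2}\|B^*(\tilde u_L^{k+1}+u_L^k)/2\|^2 - \|B^*(\tilde u_H^{k+1}+u_H^k)/2\|^2$, and absorb the high-frequency cross term---bounded by $K_B^2/(2\delta^2) \cdot \|u_H^0\|_H^2$ via the geometric estimate $\Delta t\sum_k \rho_j^{2k} \leq 1/(2\delta^2)$ on high-frequency modes---into the Step~1 viscosity lower bound, after rescaling the $B^*$-contribution by a small factor if necessary. The delicate step will be the error analysis for the low frequencies: the bound $|1-\rho_j^k|\leq k\tau_j$ saturates exactly for the modes near the cutoff where $N\tau_j = O(1)$, and one must verify that the summed error is truly dominated by $(\Delta t)^3\sum\|A u_L^{k+1}\|^2$ rather than by a full $H$-norm of $u_L^0$, which the viscosity cannot control uniformly at the lowest frequencies.
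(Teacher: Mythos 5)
Your proof is correct, and its skeleton coincides with the paper's: the same orthogonal splitting $u^k=u_L^k+u_H^k$ along $C_{\delta/\Delta t}$, the same appeal to Proposition~2.2 for the conservative low-frequency dynamics, the same triangle-inequality recombination in which the high-frequency $B^*$ cross term is absorbed into the viscosity, and the same use of the embedding $H\subset X_{-\beta}\times X_{-\beta-\frac{1}{2}}$ to pass from $\Vert u_H^0\Vert_H^2$ to the weak norm. Where you genuinely diverge is in how the two core estimates are executed. The paper proves the high-frequency bound via the telescoping identity (\ref{29}) and the contraction estimate (\ref{30}), and proves the low-frequency perturbation bound by writing the system satisfied by $w_l^k=u_l^k-y_l^k$, deriving the energy identities (\ref{27}) and closing with Gr\"onwall's lemma; you instead diagonalize the scheme, $c_j^{k+1}=\rho_j e^{i\alpha_j\Delta t}c_j^k$ with $\rho_j=(1+(\Delta t)^3\mu_j^2)^{-1}$, and sum geometric series explicitly. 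Your identities check out: the viscosity terms on mode $j$ do sum to $(1+\tau_j)(1-\rho_j^{2N})(2+\tau_j)^{-1}\vert c_j^0\vert^2$, and for the low-frequency error the per-mode quantity $\Delta t\sum_k(1-\rho_j^k)^2\lesssim (N\tau_j)^2T^*$ is measured against an available viscosity budget $\gtrsim 1-e^{-N\tau_j}\gtrsim N\tau_j/(1+\delta^2T^*)$; since $N\tau_j\le\delta^2T^*$ on the low modes, the ratio is bounded uniformly, so the step you flag as delicate does close with a constant depending only on $\delta$, $T^*$ and $K_B$. What each route buys: your spectral computation is more explicit, yields transparent constants, and avoids Gr\"onwall entirely, but it depends on the fact that the undamped scheme (\ref{12}) acts diagonally on the eigenbasis of $A$; the paper's energy-identity argument is basis-free and is precisely the form of argument reused in Section~3, where the perturbed scheme (\ref{9}) contains the non-diagonal term $BB^*$ and a mode-by-mode decoupling is no longer available.
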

\begin{proof}
The proof can be done similarly as the one of Lemma 5.2.4. in \cite{8}, we decompose the solution $u^k$ of (\ref{12}) into its low and high frequency parts. To be more precise, we consider
$$u_l^k=p_{\delta/ \Delta t}u^k,\;\;\;u_h^k=(I-p_{\delta/ \Delta t})u^k,$$\\
where $\delta>0$ is the positive number that we have been chosen above, and  $p_{\delta/ \Delta t}u$ is the orthogonal projection on $C_{\delta/ \Delta t}$.\\
Note both $u_l^k$ and $u_h^k$ are solutions of (\ref{12}).\\
In addition, $u_h^k$ lies in the space $ C_{\delta/ \Delta t}^\bot$, in which we have:
\begin{equation}\label{24}
\left \{
\begin{array}{lcr}

            \Delta t \Vert Ay\Vert_H\geq \delta \Vert y\Vert_H,\;\;\forall\;y\in C_{\delta/ \Delta t}^\bot,\;\; \text{and also}\\\\

             \Delta t \Vert Ay\Vert_{X_{-\beta}\times X_{-\beta-\frac{1}{2}}}\geq  \delta \Vert y\Vert_{X_{-\beta}\times X_{-\beta-\frac{1}{2}}},\;\;\; \forall\;y\in C_{\delta/ \Delta t}^\bot.
 \end{array}
\right.
\end{equation}
\\\\\\

{\bf The low frequencies.} First we compare $u_l^k$ with $y_l^k$ solution  of (\ref{20})\\

 with initial data $y_l^k(0)= u_l^k(0)$. Set $w_l^k=u_l^k-y_l^k$. From (\ref{22}), which is\\

 valid for solutions of (\ref{20}) with initial data in $C_{\delta/ \Delta t}$, we get\\

$$C\Vert u_l^{0}\Vert_{X_{-\beta}\times X_{-\beta-\frac{1}{2}}}^2 \leq 2\Delta t \displaystyle { \sum_{k \Delta t \in \left[0,T^* \right]}
}\left\Vert B^*\left(\frac{u_l^k+\tilde{u}_l^{k+1}}{2}\right)\right\Vert_Y^2$$
\begin{equation}\label{25}
\hspace{4.1cm} + 2\Delta t \displaystyle { \sum_{k \Delta t \in \left[0,T^* \right]}
}\left\Vert B^*\left(\frac{w_l^k+\tilde{w}_l^{k+1}}{2}\right)\right\Vert_Y^2.
\end{equation}
Now, we write the equation satisfied by $w_l^k$, which can be deduced from (\ref{12}) and (\ref{20}):
\begin{equation}\label{26}
\left \{
\begin{array}{lcr}
            \frac{\tilde{w}_l^{k+1}-w_l^k}{\Delta t} = A\left(\frac{w_l^k+\tilde{w}_l^{k+1}}{2}\right),\;k\in \mathbb{N}, & \\\\

              \frac{w_l^{k+1}-\tilde{w}_l^{k+1}}{\Delta t}={(\Delta t)^2}{A^2}{u_l^{k+1}},\;k\in\mathbb{N},\\\\

            w_l^0=0.
 \end{array}
\right.
\end{equation}

the energy estimates for $w_l^k$ give:
\begin{equation}\label{27}
\left \{
\begin{array}{lcr}
             \Vert \tilde{w}_l^{k+1}\Vert_H^2=\Vert w_l^k\Vert_H^2,\\\\

             \Vert w_l^{k+1}\Vert_H^2=\Vert \tilde{w}_l^{k+1}\Vert_H^2-2(\Delta t)^3 \left\langle Au_l^{k+1}, A\left(\frac{w_l^{k+1}+\tilde{w}_l^{k+1}}{2}\right)\right\rangle_H.

\end{array}
\right.
\end{equation}

Note that $w_l^k$ and $\tilde{w}_l^{k+1}$ belong to $C_{\delta/ \Delta t}$ for all $k\in\mathbb{N}$, since $u_l^k$ and $y_l^k$ both belong to $C_{\delta/ \Delta t}$. Therefore, the energy estimates for $w_l^k$ lead, for $k\in\mathbb{N}$, to
$$\Vert w_l^k\Vert_H^2=-2\Delta t \sum_{j=1}^{k} (\Delta t)^2 \left\langle Au_l^j, A\left(\frac{w_l^j+\tilde{w}_l^{j+1}}{2}\right)\right\rangle_H$$
$$\hspace{2cm} \leq \Delta t \sum_{j=1}^{k} (\Delta t)^2\Vert Au_l^j\Vert_H^2+\delta^2 \Delta t\sum_{j=1}^{k}\left\Vert\frac{w_l^j+\tilde{w}_l^{j+1}}{2}\right\Vert_H^2$$
$$ \hspace{1cm} \leq \Delta t \sum_{j=1}^{k} (\Delta t)^2\Vert Au_l^j\Vert_H^2+\delta^2 \Delta t\sum_{j=1}^{k}\left\Vert w_l^j\right\Vert_H^2,$$
where we used the first line of (\ref{27}).\\

Gr$\ddot{o}$nwall's Lemma applies and allows to deduced from (\ref{25}) and the fact that the operator $B$ is bounded, the existence of a positive\\
 constant ( that may change from line to line) independent of $\Delta t$ such that\\

$$c\Vert u_l^0\Vert_{X_{-\beta}\times X_{-\beta-\frac{1}{2}}}^2\leq \Delta t \displaystyle { \sum_{k \Delta t \in \left[0,T^* \right]}
}\left\Vert B^*\left(\frac{u_l^k+\tilde{u}_l^{k+1}}{2}\right)\right\Vert_Y^2$$

$$\hspace{2.7cm} +\Delta t \displaystyle { \sum_{k \Delta t \in \left]0,T^* \right]}
} (\Delta t)^2\Vert Au_l^k\Vert_H^2.$$\\

Besides,
$$\Delta t \displaystyle { \sum_{k \Delta t \in \left[0,T^* \right]}
}\left\Vert B^*\left(\frac{u_l^k+\tilde{u}_l^{k+1}}{2}\right)\right\Vert_Y^2 \leq 2\Delta t \displaystyle { \sum_{k \Delta t \in \left[0,T^* \right]}
}\left\Vert B^*\left(\frac{u^k+\tilde{u}^{k+1}}{2}\right)\right\Vert_Y^2$$
$$ \hspace{5.4cm} +2\Delta t \displaystyle { \sum_{k \Delta t \in \left[0,T^* \right]}
}\left\Vert B^*\left(\frac{u_h^k+\tilde{u}_h^{k+1}}{2}\right)\right\Vert_Y^2$$\\

and, since $u_h^k$ and $\tilde{u}_h^{k+1}$ belong to $C_{\delta/ \Delta t}^\bot$ for all $k$, we get from (\ref{24}) that\\

$\Delta t  \displaystyle { \sum_{k \Delta t \in \left[0,T^* \right]}
}\left\Vert B^*\left(\frac{u_h^k+\tilde{u}_h^{k+1}}{2}\right)\right\Vert_Y^2\leq K_B^2\Delta t  \displaystyle {\sum_{k \Delta t \in \left[0,T^* \right]}
}\left\Vert \frac{u_h^k+\tilde{u}_h^{k+1}}{2}\right\Vert_H^2$
$$ \hspace{3cm} \leq K_B^2\Delta t \displaystyle {\sum_{k \Delta t \in \left[0,T^* \right]}}\Vert u_h^k\Vert_H^2$$
$$ \hspace{5cm}\leq \frac{K_B^2}{\delta^2}\Delta t \displaystyle {\sum_{k \Delta t \in \left]0,T^* \right]}}\Vert Au_h^k\Vert_H^2 + K_B^2\Delta t \Vert u_h^0\Vert_H^2,$$
since, from the first line of (\ref{12}),
$$\Vert \tilde{u}_h^{k+1}\Vert_H^2=\Vert u_h^k\Vert_H^2,\;\;\forall\;k\in\mathbb{N}.$$
It follows that there exists $c>0$ independent of $\Delta t$ such that \\

$c\Vert u_l^0\Vert_{X_{-\beta}\times X_{-\beta-\frac{1}{2}}}^2\leq \Delta t \displaystyle { \sum_{k \Delta t \in \left[0,T^* \right]}
}\left\Vert B^*\left(\frac{u^k+\tilde{u}^{k+1}}{2}\right)\right\Vert_Y^2$\\

\begin{equation}\label{28}
\hspace{3.8cm} +\Delta t \displaystyle { \sum_{k \Delta t \in \left]0,T^* \right]}
} (\Delta t)^2\Vert Au_l^k\Vert_H^2+ \Delta t \Vert u_h^0\Vert_H^2.
\end{equation}
\\

{\bf The high frequencies.} We now discuss the decay properties of solutions of (\ref{12}) with initial data $u_h^0\in C_{\delta/ \Delta t}^\bot.$ It is easy to check that for all $k\in\mathbb{N}$, $u_h^k\in C_{\delta/ \Delta t}^\bot$. But, simple calculations give:\\

$\Vert (I-(\Delta t)^3A^2)u_h^{k+1}\Vert_{X_{-\beta}\times X_{-\beta-\frac{1}{2}}}^2$\\

$\hspace{1.2cm} =\Vert u_h^{k+1}\Vert_{X_{-\beta}\times X_{-\beta-\frac{1}{2}}}^2 +2(\Delta t)^3\Vert Au_h^{k+1}\Vert_{X_{-\beta}\times X_{-\beta-\frac{1}{2}}}^2$\\

$\hspace{4cm} +(\Delta t)^6\Vert A^2u_h^{k+1}\Vert_{X_{-\beta}\times X_{-\beta-\frac{1}{2}}}^2$\\
\begin{equation}\label{29}
= \Vert \tilde{u}_h^{k+1}\Vert_{X_{-\beta}\times X_{-\beta-\frac{1}{2}}}^2=\Vert u_h^k\Vert_{X_{-\beta}\times X_{-\beta-\frac{1}{2}}}^2,\;\; k\in\mathbb{N}.
\end{equation}
Due to (\ref{24}), we get:
$$ (1+2(\Delta t)\delta^2)\Vert u_h^{k+1}\Vert_{X_{-\beta}\times X_{-\beta-\frac{1}{2}}}^2\leq \Vert u_h^k\Vert_{X_{-\beta}\times X_{-\beta-\frac{1}{2}}}^2.$$\\

We deduce that
$$ \Vert u_h^{k+1}\Vert_{X_{-\beta}\times X_{-\beta-\frac{1}{2}}}^2\leq \frac{1}{1+2(\Delta t)\delta^2}\Vert u_h^k\Vert_{X_{-\beta}\times X_{-\beta-\frac{1}{2}}}^2,\;\; k\in\mathbb{N},$$\\

which implies
\begin{equation}\label{30}
\Vert u_h^k\Vert_{X_{-\beta}\times X_{-\beta-\frac{1}{2}}}^2\leq\left(\frac{1}{1+2(\Delta t)\delta^2}\right)^k\Vert u_h^0\Vert_{X_{-\beta}\times X_{-\beta-\frac{1}{2}}}^2,\;\;k\in\mathbb{N}.
\end{equation}
Taking $k^*=[T^*/\Delta t]$, we get a constant $\tau<1$ independent of $\Delta t>0$ such that
$$\Vert u_h^{k^*}\Vert_{X_{-\beta}\times X_{-\beta-\frac{1}{2}}}^2\leq \tau \Vert u_h^0\Vert_{X_{-\beta}\times X_{-\beta-\frac{1}{2}}}^2.$$
From (\ref{29}), we have that, for $k\in\mathbb{N}$,\\

$$\Vert u_h^0\Vert_{X_{-\beta}\times X_{-\beta-\frac{1}{2}}}^2=\Vert u_h^k\Vert_{X_{-\beta}\times X_{-\beta-\frac{1}{2}}}^2 +2\Delta t\sum_{j=0}^{k-1}(\Delta t)^2\Vert Au_h^{j+1}\Vert_{X_{-\beta}\times X_{-\beta-\frac{1}{2}}}^2$$

 $$ \hspace{4cm}+\Delta t \sum_{j=0}^{k-1}(\Delta t)^5\Vert A^2u_h^{j+1}\Vert_{X_{-\beta}\times X_{-\beta-\frac{1}{2}}}^2,$$
taking $k=k^*$, we deduce the existence of a positive constant $c_1$, which depends only on $T^*$ and $\delta$ such that \\

$$c_1 \Vert u_h^0\Vert_{X_{-\beta}\times X_{-\beta-\frac{1}{2}}}^2 \leq \Delta t\sum_{j=0}^{k^*-1}(\Delta t)^2\Vert Au_h^{j+1}\Vert_{X_{-\beta}\times X_{-\beta-\frac{1}{2}}}^2$$

$$\hspace{4.7cm} +\Delta t \sum_{j=0}^{k^*-1}(\Delta t)^5\Vert A^2u_h^{j+1}\Vert_{X_{-\beta}\times X_{-\beta-\frac{1}{2}}}^2.$$\\

Using the fact that $ H\subset X_{-\beta}\times X_{-\beta-\frac{1}{2}}$, with continuous embedding, we deduce the existence of a positive constant $c_2$, which depends only on $T^*$ and $\delta$ such that

\begin{equation}\label{31}
c_2 \Vert u_h^0\Vert_{X_{-\beta}\times X_{-\beta-\frac{1}{2}}}^2 \leq \Delta t\sum_{j=0}^{k^*-1}(\Delta t)^2\Vert Au_h^{j+1}\Vert_H^2 +\Delta t \sum_{j=0}^{k^*-1}(\Delta t)^5\Vert A^2u_h^{j+1}\Vert_H^2,
\end{equation}

holds uniformly with respect to $\Delta t>0$ for any solution of (\ref{12}) with initial data $u^0\in C_{\delta/ \Delta t}^\bot.$\\

Combining (\ref{28}) and (\ref{31}) yields Lemma 2.1, since $u_h$ and $u_l$ lie in orthogonal spaces with respect to the scalar product $\langle ., .\rangle_{X_{-\beta}}\times X_{-\beta-\frac{1}{2}}$ and $\langle A., A.\rangle_H$.
\end{proof}

\begin{rem}
The assumptions (\ref{14}) and (\ref{21})  or (\ref{17}) and (\ref{23}) hold true if we have (\ref{8}) ( see \cite{20}).
\end{rem}
\section{Polynomial stability via a "weakened" observability inequality and main result}
The main result of this paper reads as follows:\\
\begin{teo}
Assume that there exist positive constants $T$ and $c$ and $\beta>-\frac{1}{2}$ such that for all initial data $z^0\in D(A)=D(\mathcal{A})\times V$, we have (\ref{13}).\\
Then there exists $M>0$ such that
\begin{equation}\label{32}
E^k\leq\frac{M}{(1+t_k)^{\frac{1}{1+2\beta}}}\Vert z^0\Vert_{D(A)}^2, \;\;\;\forall k\geq 0,
\end{equation}
holds uniformly with respect to $0<\Delta t<1$,\; with $t_k=k\Delta t.$
\end{teo}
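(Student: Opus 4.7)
The strategy is to transfer the weakened observability to polynomial decay by combining the assumed inequality (\ref{13}), the dissipation identity (\ref{11}), and an interpolation inequality between $D(A)$, $H$ and $X_{-\beta}\times X_{-\beta-1/2}$. Set $k^{*}=[T/\Delta t]$. I would first apply (\ref{13}) with $u^k$ the solution of the conservative scheme (\ref{12}) starting from $u^0 = z^0$, and write $w^k = u^k - z^k$. Then $w^0=0$ and $w^k$ solves a discrete equation forced by $BB^{*}((z^k+\tilde z^{k+1})/2)$; a discrete energy estimate for $w^k$, combined with the boundedness of $B^{*}$ and a Grönwall argument, controls each of the three sums on the RHS of (\ref{13}) by the corresponding sums with $z^k$ in place of $u^k$. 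Since these coincide up to constants with the RHS of (\ref{11}), one obtains
\begin{equation*}
c\,\|z^{0}\|_{X_{-\beta}\times X_{-\beta-1/2}}^{2}\leq C\bigl(E^{0}-E^{k^{*}+1}\bigr).
\end{equation*}

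Next I would interpolate: the inclusions $D(\mathcal{A})\hookrightarrow V\hookrightarrow X_{-\beta}$ and $V\hookrightarrow X\hookrightarrow X_{-\beta-1/2}$ both have exponent $\theta = 1/(2(1+\beta))$, so
\begin{equation*}
\|z^{0}\|_{H}^{2}\leq C\,\|z^{0}\|_{X_{-\beta}\times X_{-\beta-1/2}}^{2\theta}\,\|z^{0}\|_{D(A)}^{2(1-\theta)}.
\end{equation*}
Substituting $E^{0}=\|z^{0}\|_{H}^{2}/2$ and raising to $1/\theta$ yields $\|z^{0}\|_{X_{-\beta}\times X_{-\beta-1/2}}^{2}\geq c\,(E^{0})^{2(1+\beta)}/\|z^{0}\|_{D(A)}^{2(2\beta+1)}$. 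Combined with the previous step,
\begin{equation*}
E^{k^{*}+1}\leq E^{0}-c\,\frac{(E^{0})^{2(1+\beta)}}{\|z^{0}\|_{D(A)}^{2(2\beta+1)}}.
\end{equation*}

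The scheme (\ref{9}) is autonomous, so the same estimate applies on every block of $k^{*}+1$ time steps. Setting $e_{m}:=E^{m(k^{*}+1)}$,
\begin{equation*}
e_{m+1}\leq e_{m}-c\,\frac{e_{m}^{\,2(1+\beta)}}{\|z^{m(k^{*}+1)}\|_{D(A)}^{2(2\beta+1)}}.
\end{equation*}
The main technical obstacle is the uniform bound $\|z^{m(k^{*}+1)}\|_{D(A)}\leq C\|z^{0}\|_{D(A)}$, independent of $m$ and $\Delta t$: one must apply $A$ to (\ref{9}) and estimate the resulting commutator term coming from $BB^{*}$, using that $B^{*}\in\mathfrak{L}(H,Y)$ and that the Euler factor $(I-(\Delta t)^{3}A^{2})^{-1}$ commutes with $A$ and is contractive. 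Once this bound is secured, the recursion reads $e_{m+1}\leq e_{m}-c\,e_{m}^{1+\sigma}/\|z^{0}\|_{D(A)}^{2\sigma}$ with $\sigma=1+2\beta>0$, and the classical lemma on positive sequences with super-linear decrement yields $e_{m}\leq C\,\|z^{0}\|_{D(A)}^{2}/(m+1)^{1/\sigma}$. Since $t_{m(k^{*}+1)}\sim mT$ uniformly in $\Delta t$, and $E^{k}$ is non-increasing, one concludes (\ref{32}) after interpolating between consecutive blocks and absorbing the trivial bound $E^{k}\leq\|z^{0}\|_{D(A)}^{2}/2$ near $t_{k}=0$.
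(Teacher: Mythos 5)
Your proposal follows essentially the same route as the paper's proof: decompose $z^k=u^k+w^k$ with $u^k$ solving the conservative viscous scheme (\ref{12}), apply the observability inequality (\ref{13}) to $u^k$ and a Gr\"onwall-type perturbation bound to $w^k$ (controlling the $w$-sums by the damping term), combine with the dissipation identity (\ref{11}) to get $c\Vert z^0\Vert_{X_{-\beta}\times X_{-\beta-\frac{1}{2}}}^2\leq E^0-E^{l+1}$, interpolate between $X_{-\beta}\times X_{-\beta-\frac{1}{2}}$, $H$ and $D(A)$ with the correct exponent $\theta=\frac{1}{2(1+\beta)}$, and iterate over blocks via the sequence lemma. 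The only notable difference is that you explicitly flag the need for a uniform bound $\Vert z^{m(k^*+1)}\Vert_{D(A)}\leq C\Vert z^0\Vert_{D(A)}$ when repeating the estimate on successive intervals --- a step the paper asserts implicitly without justification --- and your suggested fix (propagating the graph norm through the scheme using the boundedness of $B^*$) is the right way to close it.
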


For the proof of this theorem, we need a technical lemma ( see Lemma 4.4 in \cite{2}).
\begin{lem}
Let $(\mathcal{E}^k)$ be a sequence of positive real numbers satisfying
$$\mathcal{E}_{k+1}\leq  \mathcal{E}_{k}-C \mathcal{E}_{k+1}^{2+\alpha}, \;\;\;\forall k\geq 0,$$
where $C>0$ and $\alpha>-1$ are constants. Then there exists a positive constant M (depending only on $C$ and $\alpha$) such that
$$\mathcal{E}_k\leq\frac{M}{(k +1)^{\frac{1}{\alpha+1}}},\;\forall k\geq 0.$$
\end{lem}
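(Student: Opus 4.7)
The plan is to introduce the auxiliary function $f(x):=x^{-(\alpha+1)}$, which is well-defined, smooth, and strictly decreasing on $(0,\infty)$ since $\alpha+1>0$, and to show that the forward differences $f(\mathcal{E}_{k+1})-f(\mathcal{E}_k)$ are bounded below by a strictly positive constant. Telescoping will then give $f(\mathcal{E}_k)\geq c_0 k$, i.e.\ the claimed polynomial decay $\mathcal{E}_k\lesssim k^{-1/(\alpha+1)}$. This is the standard discrete counterpart of the ODE comparison $\dot y=-Cy^{2+\alpha}$, whose solution satisfies $y(t)^{-(1+\alpha)}=y(0)^{-(1+\alpha)}+(1+\alpha)Ct$, so that the explicit decay rate is $t^{-1/(1+\alpha)}$.

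First, the hypothesis together with the positivity of $\mathcal{E}_k$ forces the sequence to be strictly decreasing, hence $\mathcal{E}_k\leq\mathcal{E}_0$ for all $k$. The mean value theorem applied to $f$ on the interval $[\mathcal{E}_{k+1},\mathcal{E}_k]$ produces some $\xi_k\in[\mathcal{E}_{k+1},\mathcal{E}_k]$ with
$$ f(\mathcal{E}_{k+1})-f(\mathcal{E}_k) = (\alpha+1)\,\xi_k^{-(\alpha+2)}\,(\mathcal{E}_k-\mathcal{E}_{k+1}). $$
Since $\alpha+2>0$, from $\xi_k\leq\mathcal{E}_k$ we have $\xi_k^{-(\alpha+2)}\geq \mathcal{E}_k^{-(\alpha+2)}$, and invoking the assumption $\mathcal{E}_k-\mathcal{E}_{k+1}\geq C\mathcal{E}_{k+1}^{2+\alpha}$ this simplifies to
$$ f(\mathcal{E}_{k+1})-f(\mathcal{E}_k) \geq (\alpha+1)\,C\left(\frac{\mathcal{E}_{k+1}}{\mathcal{E}_k}\right)^{2+\alpha}. $$

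The only nontrivial obstacle is that the ratio $\mathcal{E}_{k+1}/\mathcal{E}_k$ may be arbitrarily small, which would render this lower bound useless. A dichotomy resolves the difficulty. If $\mathcal{E}_{k+1}\geq\mathcal{E}_k/2$ the previous inequality gives $f(\mathcal{E}_{k+1})-f(\mathcal{E}_k)\geq(\alpha+1)C\,2^{-(2+\alpha)}$. If instead $\mathcal{E}_{k+1}<\mathcal{E}_k/2$, the scaling of $f$ itself provides a gain: $f(\mathcal{E}_{k+1})>2^{\alpha+1}f(\mathcal{E}_k)$, so
$$ f(\mathcal{E}_{k+1})-f(\mathcal{E}_k) > (2^{\alpha+1}-1)f(\mathcal{E}_k) \geq (2^{\alpha+1}-1)\mathcal{E}_0^{-(\alpha+1)}, $$
using $\mathcal{E}_k\leq\mathcal{E}_0$ and $\alpha+1>0$ in the last step. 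Setting $c_0:=\min\bigl\{(\alpha+1)C\,2^{-(2+\alpha)},\,(2^{\alpha+1}-1)\mathcal{E}_0^{-(\alpha+1)}\bigr\}>0$, one obtains $f(\mathcal{E}_{k+1})-f(\mathcal{E}_k)\geq c_0$ in both cases.

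Telescoping from $0$ to $k-1$ yields $\mathcal{E}_k^{-(\alpha+1)}=f(\mathcal{E}_k)\geq f(\mathcal{E}_0)+kc_0\geq kc_0$, so $\mathcal{E}_k\leq(kc_0)^{-1/(\alpha+1)}$ for every $k\geq 1$. Absorbing the case $k=0$ by choosing $M$ at least $\mathcal{E}_0\,2^{1/(\alpha+1)}$ gives the announced estimate $\mathcal{E}_k\leq M(k+1)^{-1/(\alpha+1)}$. Note that $M$ depends on $C$, $\alpha$ and, through $c_0$, on the initial value $\mathcal{E}_0$; in the application to Theorem 3.1 this dependence is harmless because $\mathcal{E}_k$ is a normalised version of the discrete energy, bounded by the $D(A)$-norm of the initial datum.
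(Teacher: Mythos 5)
Your proof is correct. Note first that the paper does not actually prove this lemma: it simply quotes Lemma 4.4 of reference [2] (Ammari--Tucsnak), so there is no internal proof to compare against; your argument --- showing that $f(\mathcal{E}_k)=\mathcal{E}_k^{-(\alpha+1)}$ increases by at least a fixed amount at each step via the mean value theorem together with the dichotomy on the ratio $\mathcal{E}_{k+1}/\mathcal{E}_k$ --- is the standard discrete analogue of the ODE comparison $\dot y=-Cy^{2+\alpha}$ and is the same mechanism as in [2]. Every step checks out (strict monotonicity of the sequence, the sign and monotonicity of $f'$, the case split, the telescoping). Two small remarks. First, the final constant should be $M=\max\bigl\{(2/c_0)^{1/(\alpha+1)},\,\mathcal{E}_0\bigr\}$; your phrase ``$M$ at least $\mathcal{E}_0\,2^{1/(\alpha+1)}$'' only addresses the $k=0$ constraint, but this is cosmetic. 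Second, and more substantively, your observation that $M$ necessarily depends on $\mathcal{E}_0$ is not a defect of your proof but an inaccuracy in the statement of the lemma itself: taking $k=0$ in the conclusion forces $M\geq\mathcal{E}_0$, and the hypothesis places no upper bound on $\mathcal{E}_0$ (given any $y>0$ one may take $\mathcal{E}_1=y$ and $\mathcal{E}_0=y+Cy^{2+\alpha}$, and iterate), so ``depending only on $C$ and $\alpha$'' cannot be literally achieved. As you note, this is harmless where the lemma is used: there $\mathcal{E}_k=\mathcal{H}^k=E^{k(l+1)}/\Vert z^0\Vert_{D(A)}^2$, and $\mathcal{H}^0$ is bounded by a universal constant coming from the continuous embedding $D(A)\subset H$, so the resulting $M$ in Theorem 3.1 is independent of the data.
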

\begin{proof}

 We decompose the solution $z^k$ of (\ref{9}) as $z^k=w^k+u^k$ with $z^0=u^0$\\

 where $u^k$ is the solution of (\ref{12})  and $w^k$ is the solution of\\

\begin{equation}\label{33}
\left \{
\begin{array}{lcr}
            \frac{\tilde{w}^{k+1}-w^k}{\Delta t} = A\left(\frac{w^k+\tilde{w}^{k+1}}{2}\right)-BB^*\left(\frac{z^k+\tilde{z}^{k+1}}{2}\right),\;k\in \mathbb{N}, & \\\\

             \frac{w^{k+1}-\tilde{w}^{k+1}}{\Delta t}={(\Delta t)^2}{A^2}{w^{k+1}},\;k\in\mathbb{N},\\\\

             w^0=0.
\end{array}
\right.
\end{equation}\\\
\\\\\\\\\\\\\\\\\\

Applying Lemma 2.1 to $u^k = z^k - w^k$, we get: \\
$ c \Vert z^0\Vert_{X_{-\beta}\times X_{-\beta-\frac{1}{2}}}^2\leq 2\Bigg(\Delta t \displaystyle{\sum_{k\Delta t\in [0,T^*]}}\left\Vert B^*\left(\frac{z^k+\tilde{z}^{k+1}}{2}\right)\right\Vert_Y^2$\\
 $+ \Delta t \displaystyle{\sum_{k\Delta t\in [0,T^*]}}(\Delta t)^2\left\Vert Az^{k+1}\right\Vert_H^2 +\Delta t\displaystyle{\sum_{k\Delta t\in [0,T^*]}}(\Delta t)^5\left\Vert {A^2}{z}^{k+1}\right\Vert_H^2 \Bigg )$\\
 $ + 2\Bigg( \Delta t \displaystyle{\sum_{k\Delta t\in [0,T^*]}}\left\Vert B^* \left(\frac{w^k+\tilde{w}^{k+1}}{2}\right)\right\Vert_Y^2+ \Delta t \displaystyle{\sum_{k\Delta t\in [0,T^*]}}(\Delta t)^2\left\Vert Aw^{k+1}\right\Vert_H^2$\\
\begin{equation}\label{26}
+\Delta t\displaystyle{\sum_{k\Delta t\in [0,T^*]}}(\Delta t)^5\left\Vert {A^2}{w}^{k+1}\right\Vert_H^2 \Bigg).
\end{equation}
Now we follow the same approach as in the proof of Theorem 1.1 in \cite{8},  there exists a constant $G>0$ (independent of $\Delta t$) such that\\

$$\Delta t \displaystyle{\sum_{k\Delta t\in [0,T^*]}}\left\Vert B^* \left(\frac{w^k+\tilde{w}^{k+1}}{2}\right)\right\Vert_Y^2  + \Delta t \displaystyle{\sum_{k\Delta t\in [0,T^*]}}(\Delta t)^2\left\Vert Aw^{k+1}\right\Vert_H^2 $$\\

$\hspace{5cm}+ \Delta t\displaystyle{\sum_{k\Delta t\in [0,T^*]}}(\Delta t)^5\left\Vert {A^2}{w}^{k+1}\right\Vert_H^2$\\

$$\hspace{2cm}\leq G\Delta t  \displaystyle{\sum_{j\Delta t\in [0,T^*]}}\left\Vert B^* \left(\frac{z^j+\tilde{z}^{j+1}}{2}\right)\right\Vert_Y^2.$$

Combining this inequality and (\ref{26})), we get the existence of a constant $c$ such that\\
$$ c \Vert z^0\Vert_{X_{-\beta}\times X_{-\beta-\frac{1}{2}}}^2\leq \Delta t \displaystyle{\sum_{k\Delta t\in [0,T^*]}}\left\Vert B^* \left(\frac{z^k+\tilde{z}^{k+1}}{2}\right)\right\Vert_Y^2$$

 $$ \hspace{4cm}+ \Delta t \displaystyle{\sum_{k\Delta t\in [0,T^*]}}(\Delta t)^2\left\Vert Az^{k+1}\right\Vert_H^2$$

$$ \hspace{4cm}+\Delta t\displaystyle{\sum_{k\Delta t\in [0,T^*]}}(\Delta t)^5\left\Vert {A^2}{z}^{k+1}\right\Vert_H^2.$$

Combining this inequality and (\ref{11}), it follows that:

$$E^{l+1}\leq E^0- c \Vert z^0\Vert_{X_{-\beta}\times X_{-\beta-\frac{1}{2}}}^2.$$
By using a simple interpolation inequality ( see Proposition 2.3 in \cite{17}) and the fact that the function $E^k$ is nonincreasing, we obtain the existence of a constant $C>0$ such that
\begin{equation}\label{35}
E^{l+1}\leq E^0- C\frac{\{E^{l+1}\}^{2(1+ \beta)}}{\Vert z^0\Vert_{D(A)}^{2(1+2 \beta)}}.
\end{equation}
Estimate (\ref{35}) remains valid in successive intervals $[k(l+1), (k + 1)(l+1)]$, so, we have
$$E^{(k+1)(l+1)}\leq E^{k(l+1)}- C\frac{\{E^{(k+1)(l+1)}\}^{2(1+ \beta)}}{\Vert z^0\Vert_{D(A)}^{2(1+2 \beta)}}.$$
If we adopt the notation
$$\mathcal{H}^k=\frac{E^{k(l+1)}}{\Vert z^0\Vert_{D(A)}^2},$$
the last inequality gives
$$\mathcal{H}^{k+1}\leq\mathcal{H}^k- C(\mathcal{H}^{k+1})^{2(1+\beta)},\;\;\forall k\geq 0.$$
By using Lemma 3.1, we obtain the existence of a constant $M>0$ such that
$$\mathcal{H}^k\leq \frac{M}{(1+k)^{\frac{1}{1+2\beta}}},\;\forall k\geq 0,$$
and consequently, for all $\Delta t <1$, we have
$$E^{k(l+1)}\leq \frac{M}{(1+t_k)^{\frac{1}{1+2\beta}}}\Vert z^0\Vert_{D(A)}^2,\;\forall k\geq 0,$$
which obviously implies (\ref{32}).
\end{proof}
\section{Applications}

\subsection{Two coupled wave equations}
 We consider the following system
\begin{equation*}
\left\{
\begin{array}{lcr}
           u_{tt}(x,t)-u_{xx}(x,t)+\alpha y(x,t)=0,\;\;0<x<1,\;t>0,\\\\

           y_{tt}(x,t)-y_{xx}(x,t)+\alpha u(x,t)+\gamma y_t(x,t)=0,\;\;0<x<1,\;t>0,\\\\

           u(0,t)=u(1,t)=y(0,t)=y(1,t)=0,\;\;t>0,\\\\

           u(x,0)=u_0(x),\;u_t(x,0)=u_1(x),\; y(x,0)=y_0(x),\;y_t(x,0)=y_1(x),\;\;0<x<1.\end{array}
\right.
\end{equation*}
with $\gamma >0$ and $\alpha>0$ small enough. Take $H=L^2(0,1)^4$, the operator $B$ defined by
$$B=\left(\begin{array}{l}
0\\
0\\
0\\
 \sqrt{\gamma}\end{array}\right),\;\;\;(B^*=(0,0,0,\sqrt{\gamma}))$$
which is a bounded operator from $Y=L^2(0,1)$ into $H$ and the operator $A$ as follows
$$A=\left( \begin{array}{c}
0\;\;\;\;\;\;\;0\;\;\;\;\;\;\;\;I\;\;\;\;\;\;\;0 \\
0\;\;\;\;\;\;\;0\;\;\;\;\;\;\;\;0\;\;\;\;\;\;\;I\\
\partial_{xx}\;\;\;\;-\alpha\;\;\;\;0\;\;\;\;\;\;0\\
-\alpha\;\;\;\;\;\;\partial_{xx}\;\;\;\;\;0\;\;\;\;\;\;0
\end{array} \right),$$\\\\

with $D(A)=(H_0^1(0,1)\cap H^2(0,1))^2\times L^2(0,1)^2$.\\

If $\alpha$ is small enough, namely if $\alpha <\pi^2$, this operator $A$ is a skew-adjoint\\

operator in $H$, then the above system is equivalent to system (\ref{6}) where $Z=\left(\begin{array}{l}
u\\
y\\
u_t\\
y_t\end{array}\right)$.
We use the same method in \cite{1}, we show that the eigenvalues of $A$  are
$$sp(A)=\{i\mu_{+,k}\}\cup\{i\mu_{-,k}\}\;\;k\in\mathbb{Z^*},$$
with $\mu_{+,k}=\sqrt{\alpha+k^2\pi^2}$,\; $\mu_{-,k}=\sqrt{-\alpha+k^2\pi^2}$ and $\mu_{+,-k}=-\mu_{+,k}$\;\; $\mu_{-,k}=-\mu_{-,k},\;\forall k\in\mathbb{N^*}.$\\

The corresponding eigenvectors are, respectively, given by
$$w_{+,k}=\left(\begin{array}{l}
\frac{1}{i\mu_{+,k}}\sin(k\pi x)\\
\frac{1}{i\mu_{+,k}}\sin(k\pi x)\\
\sin(k\pi x)\\
\sin(k\pi x)\end{array}\right),\;\;w_{-,k}=\left(\begin{array}{l}
\frac{1}{i\mu_{-,k}}\sin(k\pi x)\\
-\frac{1}{i\mu_{-,k}}\sin(k\pi x)\\
\sin(k\pi x)\\
-\sin(k\pi x)\end{array}\right)\;\;k\in\mathbb{Z^*},$$
with $w_{+,-k}=w_{+,k}$  and  $w_{-,-k}=w_{-,k}$,\;\;$\forall\;k\in\mathbb{N^*}$.\\

(\ref{17}) is satisfied and (\ref{23}) holds with $\beta=0$ (see \cite{1} for more details), thus the above system is weakly observable \cite{20}, and consequently polynomially stable. Now, according to Theorem 3.1 we have
\begin{pro}
The solutions of
\begin{equation*}
\left \{
\begin{array}{lcr}
            \frac{\tilde{Z}^{k+1}-Z^k}{\Delta t} = A\left(\frac{Z^k+\tilde{Z}^{k+1}}{2}\right)-BB^*\left(\frac{Z^k+\tilde{Z}^{k+1}}{2}\right),\;k\in \mathbb{N}, & \\\\

             \frac{Z^{k+1}-\tilde{Z}^{k+1}}{\Delta t}={(\Delta t)^2}{A^2}{Z^{k+1}} ,\;k\in\mathbb{N}, \\\\

             Z^0=(u_0,y_0,u_1,y_1).
\end{array}
\right.
\end{equation*}
are polynomially uniformly decaying in the sense of (\ref{32}) with $\beta=0$.
\end{pro}
\subsection{Two boundary coupled wave equations}

We consider the following system
\begin{equation*}
\left\{
\begin{array}{lcr}
           u_{tt}(x,t)-u_{xx}(x,t)=0,\;\;0<x<1,\;t>0,\\\\

           y_{tt}(x,t)-y_{xx}(x,t)+\gamma y_t(x,t)=0,\;\;\;0<x<1,t>0,\\\\

           u(0,t)=y(0,t)=0,\;\;\;t>0,\\\\

           y_x(1,t)=\alpha u(1,t),\;\; t>0,\\\\

           u(x,0)=u_0(x),\;u_t(x,0)=u_1(x),\; y(x,0)=y_0(x),\;y_t(x,0)=y_1(x),\;\;\;\;0<x<1.\end{array}
\right.
\end{equation*}
when $\alpha,\beta\in\mathbb{R}$ with $\gamma>0$ and $\alpha>0$ small enough. Hence it is written\\

in the form (\ref{6}) with the following choices: Take $H=L^2(0,1)^4$, the \\

operator $B$ as follows:
$$B=\left(\begin{array}{l}
0\\
0\\
0\\
 \sqrt{\gamma}\end{array}\right),\;\;\;(B^*=(0,0,0,\sqrt{\gamma}))$$

which is a bounded operator from $Y=L^2(0,1)$ into $H$ and the operator\\

$A$ defined by

$$D(A)=\{(u,y,u_t,y_t)\in (V\cap H^2(0,1))^2\times L^2(0,1)^2: y_x(1)=\alpha u(1); u_x(1)=\alpha y(1)\}$$

when $V=\{v\in H^1(0,1); v(0)=0\}$ and

$$AZ=\left( \begin{array}{l}
u_t \\
y_t\\
u_{xx}\\
y_{xx}
\end{array} \right),$$

when $Z=\left(\begin{array}{l}
u\\
y\\
u_t\\
y_t\end{array}\right)$.

If $\alpha$ is small enough, namely if $\alpha <1$, this operator $A$ is skew-adjoint in $H$.\\

As in \cite{1}, the eigenvalues of $A$ are
$$sp(A)=\{i\mu_{+,k}\}\cup\{i\mu_{-,k}\}\;\;k\in\mathbb{Z^*},$$
with $\mu_{+,k}=\frac{\pi}{2}+k\pi +\epsilon_{+,k}$,\; $\mu_{-,k}=\frac{\pi}{2}+k\pi -\epsilon_{-,k}$ and $\mu_{+,-k}=-\mu_{+,k}$\;\; $\mu_{-,k}=-\mu_{-,k},\;\forall k\in\mathbb{N^*},$\\

where $\epsilon_{+,k}=\arctan(\frac{\alpha}{\mu_{+,k}})$ and $\epsilon_{-,k}=\arctan(\frac{\alpha}{\mu_{-,k}}).$\\

The corresponding eigenvectors are, respectively, given by
$$w_{+,k}=\left(\begin{array}{l}
-\frac{1}{i\mu_{+,k}}b_{+,k}\sin(\mu_{+,k}.)\\
\frac{1}{i\mu_{+,k}}b_{+,k}\sin(\mu_{+,k}.)\\
-b_{+,k}\sin(\mu_{+,k}.)\\
b_{+,k}\sin(\mu_{+,k}.)
\end{array}\right),\;\;w_{-,k}=\left(\begin{array}{l}
\frac{1}{i\mu_{-,k}}b_{-,k}\sin(\mu_{-,k}.)\\
\frac{1}{i\mu_{-,k}}b_{-,k}\sin(\mu_{-,k}.)\\
b_{-,k}\sin(\mu_{-,k}.)\\
b_{-,k}\sin(\mu_{-,k}.)\end{array}\right)
\;\;k\in\mathbb{Z^*},$$

with $w_{+,-k}=w_{+,k}$  and  $w_{-,-k}=w_{-,k}$,\;\;$\forall\;k\in\mathbb{N^*}$,\\

 and where $b_{+,k}$ and $b_{-,k}$ are chosen to normalize the eigenvectors.\\\\\\\\

(\ref{17}) is satisfied and (\ref{23}) holds with $\beta=0$ (see \cite{1} for more details), thus the above system is weakly observable \cite{20}, and consequently polynomially stable. Now, applying  Theorem 1.3  we get

\begin{pro}
The solutions of
\begin{equation*}
\left \{
\begin{array}{lcr}
            \frac{\tilde{Z}^{k+1}-Z^k}{\Delta t} = A\left(\frac{Z^k+\tilde{Z}^{k+1}}{2}\right)-BB^*\left(\frac{Z^k+\tilde{Z}^{k+1}}{2}\right),\;k\in \mathbb{N}, & \\\\

             \frac{Z^{k+1}-\tilde{Z}^{k+1}}{\Delta t}={(\Delta t)^2}{A^2}{Z^{k+1}} ,\;k\in\mathbb{N}, \\\\

             Z^0=(u_0,y_0,u_1,y_1).
\end{array}
\right.
\end{equation*}
are polynomially uniformly decaying in the sense of (\ref{32}) with $\beta=0$.
\end{pro}
\section{Further comments}
\begin{enumerate}
\item
As we mentioned in the introduction, our methods and results require the assumption that the damping operator
$B$ is bounded. We use the fact that the polynomial
decay of the energy is a consequence of the observability properties of the conservative system. That is the case, even in the continuous setting. However, in several relevant applications when the feedback law is unbounded \cite{2}, our method does not apply.
\item
Another drawback of our method is that it is restrictive for a class of operators, that is the spectrum of the  operator $A$ associated
with the undamped problem satisfies such a gap condition. This is due to the method we employ, which is based on a  discrete Ingham type inequalities. One could ask if we have some results about polynomial stability for the time semi-discrete scheme when the following generalized gap condition $w_{k+N}-w_k\geq N\gamma,\;k\in\mathbb{N}$, is satisfied for $N\geq 3$. To our knowledge, we don't have a discrete Ingham type inequalities when the last gap condition is verified, and this issue is widely open.\\

In our context, it would be also relevant to ask if our methods allow to deal with stabilization properties of fully discrete approximation scheme with   numerical viscosity or under a suitable CFL type condition on the time and space discretization parameters as in the exponential case \cite{8}.
\item
Other question arise when discretizing in time semilinear wave equations. For instance, in \cite{12}, under suitable properties of the nonlinearity it is proved that the  polynomial decay property of solutions holds. It would be interesting to analyze
whether the same polynomial decay property holds, uniformly with respect to the time-step, for the numerical schemes
analyzed in this article in this semilinear setting.
\end{enumerate}

\bibliographystyle{plain}

\end{document}